\documentclass[11pt,reqno]{amsart}
\usepackage{amsmath}
\usepackage{amsthm}
\usepackage{amssymb}
\usepackage[utf8]{inputenc}
\usepackage{dsfont}
\usepackage{hyperref}
\usepackage[shortlabels]{enumitem}
\usepackage{graphicx}
\usepackage{a4wide}

 
\newtheorem{theorem}{Theorem}[section]
\newtheorem{lemma}[theorem]{Lemma}
\newtheorem{proposition}[theorem]{Proposition}
\newtheorem{corollary}[theorem]{Corollary}

\newtheorem*{claim*}{Claim}

\theoremstyle{definition} \newtheorem{definition}[theorem]{Definition}
\theoremstyle{definition} \newtheorem{remark}[theorem]{Remark}
\theoremstyle{definition} \newtheorem*{remark*}{Remark}

\renewcommand{\epsilon}{\varepsilon}
\renewcommand{\tilde}{\widetilde}

\DeclareMathSymbol{\shortminus}{\mathbin}{AMSa}{"39}

\newcommand{\rd}{{\textup{d}}}

\newcommand{\R}{{\mathbb{R}}}
\newcommand{\Z}{{\mathbb{Z}}}
\newcommand{\N}{{\mathbb{N}}}

\newcommand{\cB}{{\mathcal{B}}}
\newcommand{\cC}{{\mathcal{C}}}
\newcommand{\cE}{{\mathcal{E}}}

\newcommand{\cL}{{\mathcal{L}}}
\newcommand{\cS}{{\mathcal{S}}}

\newcommand{\fH}{{\textup{H}}}
\newcommand{\fE}{{\textup{E}}}
\newcommand{\fF}{{\textup{F}}}
\newcommand{\fG}{{\textup{G}}}

\newcommand{\fK}{{\textup{K}}}
\newcommand{\fX}{{\textup{X}}}
\newcommand{\fC}{{\textup{C}}}
\newcommand{\fY}{{\textup{Y}}}

\begin{document}

\title{Periodic delay orbits and the polyfold implicit function theorem}
\author{Peter Albers, Irene Seifert}
\address{Mathematisches Institut, Universit\"at Heidelberg,
Im Neuenheimer Feld 205, 69120 Heidelberg, Germany}
\email{palbers@mathi.uni-heidelberg.de}
\email{iseifert@mathi.uni-heidelberg.de}

\begin{abstract}
We consider differential delay equations of the form $\partial_tx(t) = X_{t}(x(t - \tau))$ in $\R^n$, where $(X_t)_{t\in S^1}$ is a time-dependent family of smooth vector fields on $\R^n$ and $\tau$ is a delay parameter. If there is a (suitably non-degenerate) periodic solution $x_0$ of this equation for $\tau=0$, that is without delay, there are good reasons to expect existence of a family of periodic solutions for all sufficiently small delays, smoothly parametrized by $\tau$. However, it seems difficult to prove this using the classical implicit function theorem, since the equation above, considered as an operator, is not smooth in the delay parameter. In this paper, we show how to use the M-polyfold implicit function theorem by Hofer--Wysocki--Zehnder \cite{HoferWysockiZehnder09, HoferWysockiZehnder17} to overcome this problem in a natural setup.
\end{abstract}

\maketitle

\textbf{Mathematics Subject Classification (2020).} 34K13, 37C27, 47J07.

\textbf{Keywords.} Delay differential equations, periodic orbits, polyfold theory, implicit function theorem.


\section{Introduction}
\label{sec:introduction}

It is folklore knowledge that, under a non-degeneracy assumption, a periodic orbit of a vector field persists under smooth perturbations. The reason is that periodic orbits correspond to zeroes of a suitable Fredholm section in a Banach space bundle, which is transverse to the zero section by non-degeneracy. Therefore the implicit function theorem provides a smooth family of periodic orbits similar to the one in Theorem \ref{thm:main-theorem}. In this article, instead of perturbing the vector field, we look for solutions of the delay equation \eqref{eq:delay-equation}, considering the delay the perturbation parameter.

Let us be a bit more precise and suppose that we are given a smooth time-dependent vector field
$ X : S^1 \times \mathbb{R}^{n} \rightarrow \mathbb{R}^{n}$, where $S^1= \R/\Z$.
A 1-periodic orbit of $X$ is a map $x: S^1 \rightarrow \mathbb{R}^{n}$ such that $\partial_t x(t) = X_t( x(t))$ for all  $t\in S^1$. We denote the derivative of $x$ by $\partial_t x $ to avoid misleading notation later.

The corresponding delay equation with constant delay $\tau\in\mathbb{R}$ is
\begin{align}
\label{eq:delay-equation}
\partial_t x(t) = X_{t}(x(t\shortminus \tau)) \qquad \text{for all } t\in S^1.
\end{align}
Delay equations are much harder to deal with than usual differential equations. For instance, the initial value problem is non-local. Thus there could be different solutions of \eqref{eq:delay-equation} passing through the same point at $t=0$. In particular, the dynamical behaviour of equation \eqref{eq:delay-equation} cannot be captured by a flow on $\R^n$. Rather, one may consider a semi-flow on an appropriate state spaces consisting of functions $x: [-T,0] \rightarrow \R^n$, $T\geq\tau$, called initial histories and describing the ``past'' of a trajectory, see for instance \cite{Diekmann95} and some of the literature given in Section~\ref{sec:related-literature}. Such an initial history, together with \eqref{eq:delay-equation}, determines the ``future'' of the trajectory.

In this paper, we consider the delay as a perturbation instead. However,  the corresponding Banach section is merely of class $\cC^1$, see the discussion below. Therefore, the classical implicit function theorem does not provide a smooth family of periodic delay orbits. Instead, we use the implicit function theorem for M-polyfold bundles (\cite{HoferWysockiZehnder17}, stated here as Theorem~\ref{thm:sc-IFT}), which was developed in the context of symplectic field theory \cite{EliashbergGiventalHofer00}, see below for details.

Recall that a 1-periodic orbit of a vector field is called \textit{non-degenerate} if its linearized time-1-map does not have 1 as an eigenvalue, see also Definition~\ref{def:non-degenerate}. We prove the following:

\begin{theorem}
\label{thm:main-theorem}
If $x_0$ is a non-degenerate 1-periodic orbit of $X$, then there is $\tau_0 > 0$ such that for every delay $\tau$ with $\vert \tau \vert \leq \tau_0$ there exists a (locally unique) smooth 1-periodic solution $x_{\tau}$ of the delay equation~\eqref{eq:delay-equation}. Moreover, the parametrization $\tau\mapsto x_{\tau}$ is smooth.
\end{theorem}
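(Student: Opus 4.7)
The plan is to reformulate the problem as a zero problem for an sc-smooth Fredholm section on an M-polyfold and then apply Theorem~\ref{thm:sc-IFT}. Since the only new parameter is $\tau\in\R$ and no bubbling or gluing is involved, no non-trivial splicing is needed: the ambient M-polyfold is simply an sc-Banach space. Specifically, I would introduce $\fE=(E_k)_{k\geq 0}$ with $E_k=H^{k+1}(S^1,\R^n)$ and $\fF=(F_k)_{k\geq 0}$ with $F_k=H^k(S^1,\R^n)$, equip $\R$ with the constant sc-structure, and consider the trivial sc-bundle $(\fE\oplus\R)\times\fF\to\fE\oplus\R$ with section
\[
F(x,\tau)(t)=\partial_t x(t)-X_t(x(t-\tau)).
\]

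The heart of the argument is to show that $F$ is sc-smooth. The shift operator $\sigma_\tau:x\mapsto x(\,\cdot\,-\tau)$ is a bounded isomorphism on every $H^k$, but $\tau\mapsto\sigma_\tau x$ is merely continuous into $H^k$ when $x\in H^k$; for $x\in H^{k+1}$ one has $\partial_\tau\sigma_\tau x=-\sigma_\tau\partial_t x\in H^k$, so each $\tau$-derivative exactly consumes one scale. This loss-of-one-derivative pattern is the defining feature of sc-smoothness, and is exactly what rules out classical Fr\'echet smoothness on a single Banach space. Combined with sc-smoothness of the Nemytskii operator $x\mapsto X_t(x(\cdot))$ (standard from smoothness of $X$ and the Sobolev embedding $H^{k+1}(S^1)\hookrightarrow \cC^k(S^1)$) and the sc-chain rule, this gives $F\in \cC^\infty_{\mathrm{sc}}$.

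For the Fredholm hypothesis at $(x_0,0)$, the partial linearization in $x$,
\[
D_x F(x_0,0)\xi=\partial_t\xi-DX_t(x_0(t))\xi,
\]
is the usual linearized operator of the periodic orbit $x_0$. Non-degeneracy of $x_0$ says its monodromy has no eigenvalue $1$, which makes $D_x F(x_0,0):E_k\to F_k$ a Banach-space isomorphism on every level. Consequently the full differential
\[
DF(x_0,0)(\xi,s)=D_x F(x_0,0)\xi+s\cdot DX_t(x_0(t))\partial_t x_0(t)
\]
is already surjective, with one-dimensional kernel transverse to $\{\tau=0\}$. Hence $F$ is sc-Fredholm of index $1$ and the hypotheses of Theorem~\ref{thm:sc-IFT} hold.

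The sc-implicit function theorem then produces an sc-smooth one-dimensional local zero set of $F$ through $(x_0,0)$ on which $\tau$ is a coordinate, yielding the sc-smooth family $\tau\mapsto x_\tau$ together with local uniqueness. Classical $\cC^\infty$-regularity of each individual orbit $x_\tau:S^1\to\R^n$ is automatic by bootstrapping in the equation: if $x_\tau\in H^{k+1}$ then $X_t(x_\tau(\cdot-\tau))\in H^{k+1}$ by the chain rule and smoothness of $X$, hence $\partial_t x_\tau\in H^{k+1}$ and $x_\tau\in H^{k+2}$. The main obstacle throughout is the sc-smoothness verification — precisely the point at which the classical implicit function theorem fails and the polyfold framework becomes indispensable.
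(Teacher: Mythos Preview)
Your overall strategy mirrors the paper's: set up the section on sc-Hilbert spaces, verify sc-smoothness via the shift map, compute the linearization, and apply Theorem~\ref{thm:sc-IFT}. However, there is a genuine gap at the sc-Fredholm step. You write ``Hence $F$ is sc-Fredholm of index $1$'' immediately after checking that the linearization at $(x_0,0)$ is surjective with one-dimensional kernel. But in sc-calculus the nonlinear sc-Fredholm property (\cite[Definitions~3.1.11,\,3.1.16]{HoferWysockiZehnder17}) is \emph{not} implied by the linearization being a linear sc-Fredholm operator; the paper explicitly stresses this and cites counterexamples in \cite{FilippenkoWehrheimZhou19}. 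One must produce a basic-germ normal form, or equivalently verify Wehrheim's conditions (Definition~\ref{def:wehrheims-definition}), which involve uniform continuity of $\rd_{\fE'}f(\tau,\cdot)$ near the base point and a compactness-type condition (ii)(b) on sequences $\tau_\nu\to 0$. The paper devotes all of Section~\ref{sec:sc-fredholm-property} to this; in particular condition (ii)(b) is handled by extracting a convergent subsequence via the compact embedding $H_{m+1}\hookrightarrow H_m$ together with Lemma~\ref{lem:shift-map-continuous-after-evaluation}. Without this verification the hypotheses of Theorem~\ref{thm:sc-IFT} are not established and the implicit function theorem cannot be invoked.

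A minor point: your final bootstrapping argument for $x_\tau\in\cC^\infty$ is correct but redundant once the sc-Fredholm property is in place, since the regularizing condition already forces the zero set into $E_\infty$ (cf.\ Remark~\ref{rmk:about-sc-IFT}).
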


The proof of Theorem~\ref{thm:main-theorem} is given at the end of Section~\ref{sec:using-the-IFT}. Moreover, we note that if we replace $\R^n$ by a manifold $M$, then \eqref{eq:delay-equation} does not make sense as stated, since the two sides of the equation typically belong to different tangent spaces. However, one can think of other useful and interesting delay equations on general manifolds. Some of them stem from a variational formulation (i.e.\ they are the critical points of an action functional) and may be called Hamiltonian (see \cite{AlbersFrauenfelderSchlenk20} and \cite{Frauenfelder20}). The idea behind the proof of Theorem~\ref{thm:main-theorem} can be used to cover delay equations on manifolds as well. This is demonstrated for one possible equation in Section~\ref{sec:generalization-to-manifolds}.

In this article we focus on 1-periodic solutions instead of general $T$-periodic solutions, only for ease of presentation.
Moreover, we note that we do not consider delay in the time-dependence of the vector field, e.g.~equations of the form $\partial_t x(t) = X_{t-\tau}(x(t))$, since this is merely a deformation of the vector field $X_t$ and can therefore be treated by the classical implicit function theorem.
Finally, we mention that there is a rich literature on related problems. We present some results related to Theorem \ref{thm:main-theorem} in Section~\ref{sec:related-literature}.

We now formulate the functional analytic set-up. We denote by
\begin{align}
\begin{split}
\varphi: \mathbb{R} \times L^2(S^1,\mathbb{R}^n) &\longrightarrow L^2(S^1,\mathbb{R}^n) \\
(\tau,x) &\longmapsto x(\cdot \shortminus \tau).
\label{eq:def-of-phi}
\end{split}
\end{align}
the shift map, and define a map
\begin{align}
\begin{split}
s: \mathbb{R} \times W^{1,2}(S^1,\mathbb{R}^n) &\longrightarrow L^2(S^1,\mathbb{R}^n)  \\
(\tau, x) &\longmapsto \partial_t x - X(\varphi(\tau,x)). \label{eq:def-of-s}
\end{split}
\end{align}
Then the set of solutions of \eqref{eq:delay-equation} corresponds to the zero set of $s$, and, in particular, every solution $x_0$ of $\partial_t x(t) = X_t(x(t))$ satisfies $s(0,x_0)=0$. Thus it seems plausible to use an implicit function theorem to show that, under a suitable non-degeneracy assumption on $x_0$, the zero set of $s$ carries the structure of a smooth submanifold of $\R\times W^{1,2}(S^1,\mathbb{R}^n)$ having dimension equal to the Fredholm index of $s$. This Fredholm index is expected to be $1$, because $\partial_t : W^{1,2}(S^1,\mathbb{R}^n)\rightarrow L^2(S^1,\mathbb{R}^n)$ has index $0$. So the implicit function theorem would prove existence of solutions of \eqref{eq:delay-equation} and also give a parametrization. However, the map $s$ as defined in \eqref{eq:def-of-s} is in general not smooth; we will see that it is, in general, only $\mathcal{C}^1$. The reason is that the shift map $\varphi$ is not smooth in $\tau$, as will be explained in more detail in Section~\ref{sec:classical-differentiability}. The lack of regularity of $s$ implies that also the parametrization which we get from a classical implicit function theorem can only be of regularity $\mathcal{C}^1$.

Analyzing the properties of this shift map in detail, we see that it is very natural to  pass from classical to sc-calculus. Sc-calculus provides a new notion of smoothness for maps between Banach spaces equipped with a scale structure. It was defined as part of the bigger framework of polyfold theory (see the book \cite{HoferWysockiZehnder17}).  Hofer, Wysocki and Zehnder developed polyfold theory mainly for the study of moduli spaces of J-holomorphic curves, in particular for symplectic field theory \cite{EliashbergGiventalHofer00}. In that con\-text, non-smoothness of reparametrization actions is one of the main problems, and sc-calculus was made to deal with this. Indeed, Frauenfelder and Weber \cite{FrauenfelderWeber18} showed that the shift map $\varphi$ defined above in \eqref{eq:def-of-phi} is sc-smooth between appropriate sc-spaces. Moreover, there is an implicit function theorem in the sc-world (though only for sc-Fredholm maps, and the sc-Fredholm property is more complicated than the usual Fredholm property). Lastly, in finite dimensions sc-smoothness is equivalent to classical smoothness, and so the finite-dimensional zero sets of sc-Fredholm sections are, after all, classically smooth. Thus sc-calculus provides a natural way to deal with the problem described above as follows. Using the definition of the sc-Fredholm property in \cite{Wehrheim12}, we show that the map $s$ is a sc-smooth sc-Fredholm section in a sc-Hilbert space bundle. Sc-Hilbert space bundles are the easiest examples of tame strong $M$-polyfold bundles defined in \cite{HoferWysockiZehnder17}. Thus, to prove Theorem~\ref{thm:main-theorem}, we can apply the implicit function theorem from sc-calculus  \cite{HoferWysockiZehnder09, HoferWysockiZehnder17} (stated here as Theorem~\ref{thm:sc-IFT}).

\subsection*{Acknowledgements} Thanks to several members of the symplectic community at Heidelberg for fruitful discussions and helpful comments! Moreover, we thank Urs Frauenfelder and Felix Schlenk for discussions on this project and delay equations in general.

The authors acknowledge funding by the Deutsche Forschungsgemeinschaft (DFG, German Research Foundation) through Germany’s Excellence Strategy EXC-2181/1 - 390900948 (the Heidelberg STRUCTURES Excellence Cluster), the Transregional Colloborative Research Center CRC/TRR 191 (281071066) and the Research Training Group RTG 2229 (281869850).

\section{Related results and literature}
\label{sec:related-literature}

There is, of course, a lot of literature on differential delay equations and their solutions. As a good source for an overview on achievements and difficulties we recommend the book \cite{Diekmann95}. We mention below work on two different aspects of differential delay equations closer to the topic of this article. The first aspect is the existence of periodic solutions. The second aspects concerns the regularity of the dependence on the delay of (not necessarily periodic) solutions. These results are all very interesting and in some way related to our work, but to the best of our knowledge none of them implies Theorem~\ref{thm:main-theorem}. We point out that we can only give a very limited (and biased) glimpse into the existing literature on differential delay equations. However, the use of the polyfold setting and, in particular, the polyfold implicit function theorem is certainly new in the context of differential delay equations.

\subsection{Existence of periodic solutions}

One class of results concerns differential delay equations with fixed delay and asks for existence of periodic solutions with arbitrary period. Here, Mallet-Paret \cite{MalletParet88} and Nussbaum \cite{Nussbaum73} used global methods to find periodic solutions for some classes of differential delay equations. Kaplan and Yorke \cite{KaplanYorke74} showed the existence (and some properties) of periodic solutions of a differential delay equation with symmetries and fixed delay by converting it to an ordinary differential equation in twice the dimension. The uniqueness counterpart in the Kaplan--Yorke setting was recently solved by L{\'o}pez Nieto \cite{LopezNieto20}. Existence results for periodic orbits with small delay were proven by Arino--Hbid \cite{ArinoHbid90} and Hbid--Qesmi \cite{HbidQesmi06} locally near a stable equilibrium of the delay equation by bifurcation arguments. In these results the period is allowed to vary with the delay, and there is no statement about the regularity with respect to the delay. Sieber \cite{Sieber12} shows how to locally find families of periodic orbits even for state-dependent delay, but he does not consider varying the delay. He uses the concept of ``extendable continuous differentiability'' (mentioned before in \cite{HartungKrisztinWaltherWu06}) which seems to have a certain similarity with the scale differentiability by Hofer--Wysocki--Zehnder \cite{HoferWysockiZehnder17}.

\subsection{Smooth dependence on initial history and delay}

In the context of solving differential delay equations with the help of a semi-flow acting on a function space, it is natural to ask whether solutions depend smoothly on the initial history and on the delay. This means analyzing the regularity of the solution map
\begin{align}
\label{eq:solution-map}
(\phi, \tau) \longmapsto x
\end{align}
sending an initial history $\phi: [-T,0]\to \R^n$ and a delay $\tau\leq T$ to the appropriate maximal solution $x: [-T, T_{\phi,\tau} ]\to \R^n$ of the considered delay equation. It turns out that the differentiability of this map depends a lot on the choice of the space of initial histories. Hale--Ladeira \cite{HaleLadeira91} showed that in case of $W^{1,\infty}$ as history space, the dependence is of class $\cC^1$. Recently, Nishiguchi \cite{Nishiguchi19} showed the same for history spaces of general Sobolev type. Walther \cite{Walther19} discusses different kinds of $\cC^1$-differentiability in Fréchet spaces. None of these articles touch upon the question of regularity beyond $\cC^1$.

However, dependence of solutions on delay in the sense of the map \eqref{eq:solution-map} above is different from dependence of solutions on delay in the sense of the map
\begin{align}
\label{eq:parametrization-map}
\tau \longmapsto x_{\tau}
\end{align}
that appears in Theorem~\ref{thm:main-theorem}. On one hand, we do not consider dependence on initial histories at the same time, which circumvents the question of what history space to use. This is why, in our case, $\cC^1$-dependence is immediate from classical methods, see the discussion in Section~\ref{sec:classical-differentiability} (especially Remark~\ref{rmk:consequences-for-s}). On the other hand, the parametrization map \eqref{eq:parametrization-map} is not just the restriction of the solution map \eqref{eq:solution-map} to a fixed initial history. Indeed, there is no reason why the periodic orbits from Theorem~\ref{thm:main-theorem} should all agree on an interval of length $\tau$.  Therefore, we do not see any direct connection between our theorem and the articles \cite{HaleLadeira91, Nishiguchi19, Walther19} mentioned above.

\section{Classical differentiability}
\label{sec:classical-differentiability}

From now on, for an integer $m\geq0$ we denote by
\begin{align}
H_m:=W^{m,2} := W^{m,2}(S^1,\mathbb{R}^n)
\label{eq:definition-of-Hm}
\end{align}
the Hilbert space of periodic maps of Sobolev class $(m,2)$ with values in $\R^n$. In particular, $H_0=L^2=L^2(S^1,\mathbb{R}^n)$. Consider the following shift map:
\begin{align*}
\varphi: \mathbb{R} \times H_m &\longrightarrow H_m \\
(\tau,x) &\longmapsto x(\cdot \shortminus \tau)
\end{align*}

It is easy to see that $\varphi$ is continuous after evaluation, but it is not continuous in the operator topology. This can be remedied by choosing a higher regularity level of the domain while keeping the one on the target. In this section we collect these facts. Proofs following Frauenfelder--Weber \cite{FrauenfelderWeber18} can be found in the appendix. We use the notation $\cL(\cdot,\cdot)$ for spaces of linear maps.
	
\begin{lemma}[{\cite[Lemma 2.1]{FrauenfelderWeber18}}]
\label{lem:shift-map-continuous-after-evaluation}
For every $m\in\N_0$, the map
\begin{align*}
\R &\longrightarrow \cL\big(H_m,H_m\big) \\
\tau &\longmapsto \varphi(\tau,\cdot)
\end{align*}
is continuous with respect to the compact-open topology on $\cL(H_m,H_m)$. 
\end{lemma}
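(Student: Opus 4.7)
The plan is to establish strong (pointwise) continuity of the map $\tau\mapsto \varphi(\tau,\cdot)$ into $\cL(H_m,H_m)$ and then upgrade this to continuity in the compact-open topology using the fact that each operator $\varphi(\tau,\cdot)$ is an isometry.

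First I would observe that translation commutes with distributional differentiation, $\partial_t^j \varphi(\tau,x)=\varphi(\tau,\partial_t^j x)$, and that $\varphi(\tau,\cdot)$ preserves the $L^2$-norm by change of variables. It follows that each $\varphi(\tau,\cdot)$ is an isometry of $H_m$, so the family $\{\varphi(\tau,\cdot)\}_{\tau\in\R}$ is equi-bounded in $\cL(H_m,H_m)$ with operator norm $1$.

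Next I would prove strong continuity: for each fixed $x\in H_m$, the map $\tau\mapsto \varphi(\tau,x)\in H_m$ is continuous. Using the commutation above, this reduces to the continuity of translation in $L^2$, applied to $\partial_t^j x\in L^2$ for $j=0,1,\ldots,m$. The $L^2$-continuity itself is the classical fact, proved by approximating an arbitrary $y\in L^2$ by continuous (hence uniformly continuous on $S^1$) functions, for which translation continuity is obvious, and then using the isometry $\|\varphi(\tau,\cdot)\|_{L^2\to L^2}=1$ together with an $\epsilon/3$-argument.

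Finally I would upgrade strong continuity to uniform convergence on compact subsets. Given a compact $K\subset H_m$, fix $\tau_0\in\R$ and $\epsilon>0$. Since $K$ is totally bounded, choose a finite $\epsilon/3$-net $x_1,\ldots,x_N\in K$, and by the previous step pick $\delta>0$ such that $|\tau-\tau_0|<\delta$ implies $\|\varphi(\tau,x_i)-\varphi(\tau_0,x_i)\|_{H_m}<\epsilon/3$ for every $i$. For arbitrary $x\in K$, pick $x_i$ with $\|x-x_i\|_{H_m}<\epsilon/3$ and estimate, using the isometry property,
\begin{equation*}
\|\varphi(\tau,x)-\varphi(\tau_0,x)\|_{H_m} \leq \|x-x_i\|_{H_m} + \|\varphi(\tau,x_i)-\varphi(\tau_0,x_i)\|_{H_m} + \|x_i-x\|_{H_m} < \epsilon.
\end{equation*}
This yields the claimed compact-open continuity. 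The only ``obstacle'' is really the standard interplay between an equi-bounded family of operators and strong convergence on a dense/total set — a routine functional-analytic packaging of $L^p$-translation continuity. The lemma is a clean technical step whose real content is that allowing the target regularity to equal the domain regularity is already enough for continuity, even though (as the next lemmas will show) differentiability in $\tau$ requires sacrificing regularity.
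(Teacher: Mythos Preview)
Your proof is correct and follows essentially the same approach as the paper's: both establish pointwise (strong) continuity by approximating an arbitrary $x$ with smooth functions and using the isometry property, then use an $\epsilon/3$-argument together with the isometry to upgrade. The only cosmetic difference is that the paper phrases the second step as proving joint sequential continuity of $(\tau,x)\mapsto\varphi(\tau,x)$ rather than uniform convergence on compact sets, but these are equivalent here and the underlying estimates are identical.
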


\begin{lemma}[{\cite[Lemma 2.2]{FrauenfelderWeber18}}]
\label{lem:shift-map-not-continuous}
The shift map $\varphi$ is not continuous as a map
\begin{align*}
\varphi: \mathbb{R} &\longrightarrow \cL \big( H_0,H_0 \big),
\end{align*}
where the target space carries the operator norm topology.
\end{lemma}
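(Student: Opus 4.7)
The plan is to show that $\varphi$ fails to be continuous at $\tau=0$ in the operator norm topology by exhibiting a sequence $\tau_n \to 0$ along which $\|\varphi(\tau_n,\cdot) - \mathrm{Id}\|_{\mathcal{L}(H_0,H_0)}$ stays bounded away from zero. Since $\varphi(0,\cdot) = \mathrm{Id}$, this suffices. The natural test functions for detecting non-continuity of the shift in operator norm are high-frequency Fourier modes, because $\varphi$ acts on them by multiplication by a unimodular phase that depends on both the frequency and the shift.

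Concretely, I would take $\tau_n = 1/(2n)$ for $n \in \mathbb{N}$ and pick a normalized real-valued test function oscillating at frequency $n$, for example
\begin{equation*}
x_n(t) := \sqrt{2}\,\bigl(\cos(2\pi n t),0,\ldots,0\bigr) \in H_0,
\end{equation*}
which satisfies $\|x_n\|_{L^2} = 1$. A direct computation using $\cos(\alpha - \pi) = -\cos\alpha$ gives
\begin{equation*}
\varphi(\tau_n, x_n)(t) = \sqrt{2}\,\bigl(\cos(2\pi n t - \pi),0,\ldots,0\bigr) = -x_n(t),
\end{equation*}
so that $\|\varphi(\tau_n, x_n) - x_n\|_{L^2} = 2$. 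Consequently
\begin{equation*}
\|\varphi(\tau_n,\cdot) - \varphi(0,\cdot)\|_{\mathcal{L}(H_0,H_0)} \geq 2 \qquad \text{for all } n \in \N,
\end{equation*}
while $\tau_n \to 0$, which contradicts continuity of $\tau \mapsto \varphi(\tau,\cdot)$ in operator norm at $0$.

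There is no real obstacle here; the entire argument rests on the observation that on $L^2$ the shift is an isometry with spectrum on the unit circle and that its action on each Fourier mode rotates it by a frequency-dependent phase, so arbitrarily small shifts can flip arbitrarily high modes. Contrast this with Lemma~\ref{lem:shift-map-continuous-after-evaluation}: there one fixes $x \in H_0$ (so only finitely many Fourier modes carry essentially all of its $L^2$-mass) and then the phases $e^{-2\pi i k \tau}$ are uniformly close to $1$ on that finite set for small $\tau$, yielding continuity after evaluation. The point of Lemma~\ref{lem:shift-map-not-continuous} is precisely that this uniform control fails when $x$ is allowed to vary, and the construction above makes this quantitative in the sharpest possible way.
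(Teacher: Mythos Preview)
Your proof is correct and follows essentially the same strategy as the paper: for a sequence (or family) of small shifts, exhibit unit-norm test functions on which the shift differs from the identity by a fixed positive constant in $L^2$. The paper only sketches this and defers to \cite{FrauenfelderWeber18} for an explicit construction achieving $c=\sqrt{2}$, whereas your Fourier-mode witnesses $x_n$ give the sharper constant $c=2$ (which is best possible since $\varphi(\tau,\cdot)$ is an isometry).
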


\begin{proof}
For every small $\tau$ construct a function $x_{\tau} \in H_0$ of norm $1$ such that $\Vert \varphi(\tau, x_{\tau}) - \varphi(0,x_{\tau}) \Vert_{H_0} = c>0$. This implies $\Vert \varphi(\tau,\cdot) - \varphi(0,\cdot) \Vert_{\cL} \geq c$. Note that by Lemma~\ref{lem:shift-map-continuous-after-evaluation}, such a family $(x_{\tau})_{\tau>0}$ cannot converge in $H_0=L^2$. An easy construction of $x_\tau$ with $c=\sqrt{2}$ is contained in \cite{FrauenfelderWeber18}.
\end{proof}

Now let us consider the shift map $\varphi$ as a map from $\mathbb{R}\times H_1$ to $H_0$. 
\begin{lemma}
\label{lem:derivative-of-shift-map}
The shift map 
\begin{align*}
\varphi: \mathbb{R} \times H_1 &\longrightarrow H_0 \\
(\tau,x) &\longmapsto x(\cdot \shortminus \tau)
\end{align*}
is differentiable with derivative at a point $(\tau,x)$ given by
\begin{align}
\rd\varphi(\tau,x):\mathbb{R}\times H_1 &\longrightarrow H_0 \nonumber\\
(T,\hat{x}) &\longmapsto \varphi(\tau,\hat{x}) - T\cdot\varphi(\tau,\partial_tx).
\label{eq:formula-derivative-of-phi}
\end{align}
\end{lemma}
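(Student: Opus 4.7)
The plan is to verify Fréchet differentiability directly from the definition, by controlling the remainder
\[
R(T,\hat x) := \varphi(\tau+T, x+\hat x) - \varphi(\tau, x) - \rd\varphi(\tau,x)(T,\hat x)
\]
and showing $\|R(T,\hat x)\|_{H_0} = o(|T| + \|\hat x\|_{H_1})$ as $(T,\hat x)\to 0$. Boundedness of the proposed derivative is immediate because translation is an isometry of $H_0$ and $\|\partial_t x\|_{H_0} \leq \|x\|_{H_1}$, so $(T,\hat x)\mapsto \varphi(\tau,\hat x) - T\,\varphi(\tau,\partial_t x)$ is a bounded linear map $\mathbb{R}\times H_1 \to H_0$.

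Using linearity of $\varphi$ in the second slot, I split $R = A + B$ with
\[
A(t) = x(t\shortminus\tau\shortminus T) - x(t\shortminus\tau) + T\,\partial_t x(t\shortminus\tau), \qquad B(t) = \hat x(t\shortminus\tau\shortminus T) - \hat x(t\shortminus\tau).
\]
For $B$, the fundamental theorem of calculus together with Cauchy--Schwarz yields the standard Sobolev estimate $\|\hat y(\cdot \shortminus T) - \hat y\|_{H_0} \leq |T|\,\|\partial_t \hat y\|_{H_0}$ for any $\hat y \in H_1$; since shifts are $L^2$-isometries, I obtain $\|B\|_{H_0} \leq |T|\cdot\|\hat x\|_{H_1}$, which is $o(|T|+\|\hat x\|_{H_1})$.

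The $A$-term is the key point. I would write $x(t\shortminus\tau\shortminus T) - x(t\shortminus\tau) = -\int_0^T \partial_t x(t\shortminus\tau\shortminus s)\,\rd s$ (valid in $L^2$ since $x\in H_1$), so that
\[
A(t) = \int_0^T \bigl(\partial_t x(t\shortminus\tau) - \partial_t x(t\shortminus\tau\shortminus s)\bigr)\,\rd s.
\]
Minkowski's integral inequality then gives
\[
\|A\|_{H_0} \leq \int_0^{|T|} \|\varphi(\tau,\partial_t x) - \varphi(\tau+s,\partial_t x)\|_{H_0}\,\rd s.
\]
Because $\partial_t x \in H_0$, Lemma~\ref{lem:shift-map-continuous-after-evaluation} with $m=0$ ensures the integrand tends to $0$ as $s\to 0$, so $\|A\|_{H_0} = o(|T|)$, and combined with the bound on $B$ this closes the argument.

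The main obstacle is exactly the $A$-term: without extra regularity on $x$ one cannot hope for a quantitative $O(T^2)$ bound, and the argument is forced to rely only on the strong but non-uniform continuity of shifts on $L^2$ provided by Lemma~\ref{lem:shift-map-continuous-after-evaluation}. This loss of precisely one Sobolev degree when differentiating in $\tau$ is the phenomenon that ties differentiability of $\varphi$ to a scale structure on the underlying Hilbert spaces, foreshadowing the sc-setting used later in the paper.
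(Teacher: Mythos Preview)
Your proof is correct and follows essentially the same route as the paper's: the same splitting $R = A + B$ into an $x$-part and an $\hat x$-part, the $\hat x$-part bounded by the difference-quotient estimate (the paper's Lemma~\ref{lem:difference-quotients}(i)), and the $x$-part shown to be $o(|T|)$ via strong continuity of shifts on $L^2$. The only cosmetic difference is that the paper packages the $A$-estimate as a separate Lemma~\ref{lem:difference-quotients}(ii) proved by smooth approximation, whereas you obtain it directly from the integral representation, Minkowski's inequality, and Lemma~\ref{lem:shift-map-continuous-after-evaluation}.
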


The statement of this Lemma \ref{lem:derivative-of-shift-map} follows from \cite[Theorem 6.1]{FrauenfelderWeber18} together with \cite[Proposition 1.2.1, stated below as {Proposition~\ref{prop:alternative-characterization-of-sc1}}]{HoferWysockiZehnder17}.
For convenience of the reader, we include a direct proof in the appendix. 

\begin{remark}
\label{rmk:shift-map-C^1}
In fact, one can even show that the derivative is continuous as a map $\rd\varphi: \mathbb{R} \times H_1 \rightarrow \cL( \mathbb{R} \times H_1, H_0 )$, that is, $\varphi: \mathbb{R} \times H_1 \rightarrow H_0$ is $\mathcal{C}^1$.
\end{remark}

In the same way, for each $m\in\mathbb{N}$ we can consider $\varphi$ as a map
\begin{align*}
\varphi: \mathbb{R} \times H_{m+1}&\longrightarrow H_m\\
(\tau,x) &\longmapsto x(\cdot \shortminus \tau)
\end{align*}
and see that it is $\mathcal{C}^1$. This is easiest to prove if one works with the norm $\Vert x\Vert_{m}:= \Vert x \Vert_{L^2} + \Vert \partial_t^mx \Vert_{L^2}$ which is equivalent to the usual Sobolev norm $\Vert \cdot \Vert_{H_m}= \Vert \cdot \Vert_{W^{m,2}}$. In a different vein, one might consider $\varphi$ as a map $\varphi: \mathbb{R} \times H_m \longrightarrow H_0$ to gain regularity. This indeed works.

\begin{lemma}
\label{lem:k-times-differentiable}
For $m\in\mathbb{N}$ the map
\begin{align*}
\varphi: \mathbb{R} \times H_m &\longrightarrow H_0\\
(\tau,x) &\longmapsto x(\cdot \shortminus \tau)
\end{align*}
is of class $\cC^m$.
\end{lemma}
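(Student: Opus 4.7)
The plan is to proceed by induction on $m$, with the base case $m = 1$ given directly by Lemma~\ref{lem:derivative-of-shift-map} together with Remark~\ref{rmk:shift-map-C^1}. For the inductive step, fix $m \geq 2$ and assume the lemma for $m - 1$. Since $H_m$ embeds continuously in $H_1$, Lemma~\ref{lem:derivative-of-shift-map} shows that $\varphi: \R \times H_m \to H_0$ is differentiable with
\[
\rd\varphi(\tau, x)(T, \hat{x}) = \varphi(\tau, \hat{x}) - T\cdot \varphi(\tau, \partial_t x).
\]
It then suffices to verify that $\rd\varphi$, viewed as a map $\R \times H_m \to \cL(\R \times H_m, H_0)$, is of class $\cC^{m-1}$.

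I would decompose $\rd\varphi$ as $A(\tau)\hat{x} - T \cdot B(\tau, x)$, where
\[
A: \R \to \cL(H_m, H_0), \quad A(\tau) = \varphi(\tau, \cdot),
\]
and
\[
B: \R \times H_m \to H_0, \quad B(\tau, x) = \varphi(\tau, \partial_t x).
\]
The map $B$ factors as $\varphi \circ \iota$, where $\iota: \R \times H_m \to \R \times H_{m-1}$, $(\tau, x) \mapsto (\tau, \partial_t x)$, is bounded linear (hence smooth), and $\varphi: \R \times H_{m-1} \to H_0$ is $\cC^{m-1}$ by the inductive hypothesis. Thus $B$ is $\cC^{m-1}$.

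For $A$, I would iterate Lemma~\ref{lem:derivative-of-shift-map} and show by a subsidiary induction on $k$ that $A$ is $\cC^{m-1}$ with $k$-th derivative
\[
A^{(k)}(\tau)(\hat{x}) = (-1)^k \varphi(\tau, \partial_t^k \hat{x}) \quad \text{for } 0 \leq k \leq m - 1.
\]
The basic technical input is the estimate
\[
\|\varphi(\tau, y) - \varphi(\tau', y)\|_{H_0} \leq |\tau - \tau'|\cdot \|\partial_t y\|_{H_0} \quad \text{for } y \in H_1,
\]
obtained by writing $y(t-\tau) - y(t-\tau') = \int_{t-\tau}^{t-\tau'} \partial_s y(s)\,\rd s$ and applying Cauchy--Schwarz together with Fubini. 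Applied with $y = \partial_t^k \hat{x}$ (which lies in $H_1$ precisely when $k + 1 \leq m$), it gives operator-norm continuity of $A^{(k)}$; applied once more to the second-order remainder (now requiring $y \in H_2$, i.e.\ $k + 2 \leq m$), it yields differentiability of $A^{(k)}$ with derivative $A^{(k+1)}$. Combining both halves, $\rd\varphi$ is $\cC^{m-1}$ and hence $\varphi$ is $\cC^m$.

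The main obstacle is pure bookkeeping: each successive derivative of $A$ consumes one Sobolev level of the domain, so exactly $m - 1$ continuous derivatives are available and the top derivative $A^{(m-1)}$ is only continuous, not differentiable. The induction has to be set up so that this regularity budget is respected at every step, and it is precisely the sharpness of this budget that prevents the lemma from yielding regularity higher than $\cC^m$.
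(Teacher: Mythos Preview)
Your argument is correct and organizes the proof differently from the paper. The paper's sketch proceeds by writing down, for each $k\leq m$, the candidate $k$-th derivative $\rd^k\varphi(\tau,x)$ as an explicit multilinear map on $(\R\times H_m)^k$: it is a sum of terms, each a shift of some $\partial_t^{j}\hat{x}_i$ or of $\partial_t^{k}x$, the top-order summand being $(-1)^k\prod_i T_i\cdot\varphi(\tau,\partial_t^{k}x)$. One then checks term by term, by the same estimates as in Lemma~\ref{lem:derivative-of-shift-map} (equivalently Lemma~\ref{lem:difference-quotients}), that this formula really is the derivative and that it is continuous.

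Your inductive scheme avoids ever writing these multilinear expressions down. By splitting $\rd\varphi$ into the $x$-independent piece $A(\tau)$ and the piece $B(\tau,x)=\varphi(\tau,\partial_t x)$, the inductive hypothesis on $\varphi:\R\times H_{m-1}\to H_0$ disposes of $B$ in one line, and only the one-variable map $A:\R\to\cL(H_m,H_0)$ needs direct work. The integral estimate you invoke for $A$ is precisely Lemma~\ref{lem:difference-quotients}(i), and the second-order remainder bound you describe is what the paper would also use (applied to $y=\partial_t^{k}\hat{x}\in H_2$); so the analytic input is identical. What your route buys is a much cleaner accounting of the ``regularity budget'' (each differentiation of $A$ costs one Sobolev level, so exactly $m-1$ derivatives are available), at the price of not displaying the explicit higher multilinear formulas for $\rd^k\varphi$ that the paper's direct computation makes visible.
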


\begin{remark}
\label{rmk:consequences-for-s}
We recall from the introduction, see \eqref{eq:def-of-s}, the map $s : \mathbb{R} \times H_m \rightarrow H_0$ defined by
\begin{align}
\begin{split}
s: \mathbb{R} \times H_m&\longrightarrow H_0  \\
(\tau, x) &\longmapsto \partial_t x - X(\varphi(\tau,x))
\end{split}
\end{align}
where $X: S^1 \times \mathbb{R}^n \rightarrow \mathbb{R}^n$ is some time-dependent smooth vector field. Since  $s$ is $\mathcal{C}^1$, the classical implicit function theorem implies ($s$ is indeed Fredholm)  the existence of zeroes of $s$ (i.e.~solutions of \eqref{eq:delay-equation} for small $\tau\in\mathbb{R}$) under a suitable non-degeneracy assumption. The implicit function theorem will guarantee the parametrization of these solutions to be $\mathcal{C}^1$ in $\tau$. A priori this parametrization will not be of higher regularity, though. In order to gain better regularity one might be tempted to pass  to the $C^2$-map $s: \mathbb{R} \times H_2 \rightarrow H_0$. However, since $s: \mathbb{R} \times H_2 \rightarrow H_0$ factors through the compact embedding $H_1\hookrightarrow H_0$ it fails to be Fredholm. In addition, its linearization is never surjective.

One aim of this article is to employ the natural framework of scale calculus and the corresponding scale implicit function theorem in order to directly prove the existence of a $\cC^\infty$-family of solutions to \eqref{eq:delay-equation} leading to Theorem \ref{thm:main-theorem}.
\end{remark}

\section{Sc-Smoothness}
\label{sec:sc-smoothness}

Sc-calculus (where ``sc-'' stands for ``scale'') is part of polyfold theory, an extensive framework that was developed by Hofer, Wysocki and Zehnder to study moduli spaces of J-holomorphic curves. All definitions and details can be found in the book \cite{HoferWysockiZehnder17}. The implicit function theorem in sc-calculus that we will use (see Theorem~\ref{thm:sc-IFT} below) is stated in the context of M-polyfold bundles. All M-polyfolds and bundles considered in this article are in fact sc-Hilbert spaces (resp.~sc-Hilbert manifolds in Section~\ref{sec:generalization-to-manifolds}). This leads to significant technical simplifications. For instance, neither retraction maps nor boundaries need to be considered and the existence of sc-smooth bump functions is automatic.

Here we state the relevant definitions from sc-calculus, along the way introducing the corresponding objects in the context of this article. We also mention some interesting and important results concerning sc-differentiability. In the end of this section, we show that the map $s$ (which was defined in equation~\eqref{eq:def-of-s} and cuts out the solution space) is sc-smooth.

\begin{definition}[{\cite[Definition~1.1.1]{HoferWysockiZehnder17}}]
A \textit{sc-Hilbert space} $\fE$ is a Hilbert space $E_0$ together with a filtration 
$$
\cdots\subseteq E_{m+1}\subseteq E_m\subseteq\cdots\subseteq E_0
$$ 
by subspaces $E_m$, $m\in\N_0$, all of which are Hilbert spaces in their own right, in such a way that all inclusions $E_{m+1} \hookrightarrow E_m$ are compact and dense.
\end{definition}

The norm on the Hilbert space $E_m$ will be denoted by $\Vert \cdot \Vert_{E_m}$. We use the notation
\begin{align*}
\fE^1 = \left( (\fE^1)_m = E_{m+1} \right)_{m\in\N_0}
\end{align*}
to denote the subspace $E_1$ with the induced filtration. Elements of the intersection $E_{\infty}:= \cap_{m\in\N_0} E_m$ are called \textit{smooth points}. We observe that every finite dimensional Hilbert space $E$ is a sc-Hilbert space $\fE$ with the constant filtration $E_m=E$. In the infinite dimensional case, it follows from compactness of the inclusions that $E_{m+1}\neq E_m$ for all $m\in\N_0$. Note that products and sums of sc-Hilbert spaces are sc-Hilbert spaces again.

In this article we use the sc-Hilbert space
\begin{align*}
\fH := \left( H_m = W^{m,2}(S^1,\mathbb{R}^n) \right)_{m\in\N_0},
\end{align*}
that is the Hilbert space $H_0=L^2(S^1,\mathbb{R}^n)$ with filtration given by the numbers of weak derivatives. Moreover, consider $\fH^1$ with the induced filtration and $\R$ with the constant filtration $\R_m\equiv\R$. Then the shift map $\varphi$ and the map $s$ from equation \eqref{eq:def-of-s} in Section~\ref{sec:introduction} are maps between these sc-spaces, that is
\begin{align*}
\varphi: \mathbb{R} \times \fH &\longrightarrow \fH \\
(\tau,x) &\longmapsto x(\cdot \shortminus \tau)
\end{align*}
and
\begin{align}
\begin{split}
s: \mathbb{R} \times \fH^1 &\longrightarrow \fH \\
(\tau, x) &\longmapsto \partial_t x - X(\varphi(\tau,x))
\end{split}
\label{eq:def-of-s-as-sc-map}
\end{align}
where $X: S^1 \times \mathbb{R}^n \rightarrow \mathbb{R}^n$ is a time-dependent smooth vector field on $\R^n$.

We now state the definitions of sc-continuity, sc-differentiability and sc-smoothness for maps between sc-Hilbert spaces. In the book \cite{HoferWysockiZehnder17}, these notions are defined more generally for maps between open subsets of quadrants of sc-Banach spaces.

\begin{definition}[{\cite[Definition~1.1.13]{HoferWysockiZehnder17}}]
A map $f: \fE \to \fF$ between sc-Hilbert spaces $\fE$ and $\fF$ is \textit{sc-continuous} ($sc^0$) if it satisfies $f(E_m) \subseteq F_m$ for all $m\in\N_0$ and the induced maps $f: E_m \to F_m$ are continuous.
\end{definition}

\begin{definition}[{\cite[Definition~1.1.15]{HoferWysockiZehnder17}}]
Let $f: \fE \to \fF$ be a map between sc-Hilbert spaces. It is called \textit{sc-differentiable} ($sc^1$) if the following holds:
\begin{enumerate}
\item $f$ is sc-continuous.
\item For every $x\in E_1$ there exists a bounded linear operator $\rd f(x): E_0 \to F_0$ such that
\begin{align*}
\lim_{h\in E_1, \Vert h\Vert_{E_1} \to 0} \frac{\Vert f(x+h) - f(x) - \rd f(x)h\Vert_{F_0}}{\Vert h\Vert_{E_1}} = 0.
\end{align*}
\item The \textit{tangent map} $Tf$ given by
\begin{align*}
Tf: \fE^1 \oplus \fE &\longrightarrow \fF^1 \oplus \fF \\
(x,h) &\longmapsto \rd f(x) h
\end{align*}
is sc-continuous.
\end{enumerate}
\end{definition}

Note that this definition does not require the map $E_1 \to \mathcal{L}(E_0,F_0), x \mapsto \rd f(x)$ to be continuous with respect to the operator norm. Indeed, in general this will not be the case. In finite dimensions, since the sc-structure is constant, $sc^1$-maps are differentiable in the usual sense, and by Propostion~\ref{prop:alternative-characterization-of-sc1} below they are exactly the $\cC^1$-maps. Lots of examples of $sc^1$-maps in infinite dimensions can be found in \cite{HoferWysockiZehnder10}.

Having the notion of sc-differentiability, we can proceed inductively to define sc-smoothness:
\begin{definition}[{\cite[below Remark~1.1.16]{HoferWysockiZehnder17}}]
Let $k\geq 2$. A map $f: \fE \to \fF$ between sc-Hilbert spaces $\fE$ and $\fF$ is $sc^k$ if it is $sc^{k-1}$ and its tangent map is $sc^{k-1}$. It is \textit{sc-smooth} ($sc^{\infty}$) if it is $sc^k$ for every $k\in\N$.
\end{definition}

The following alternative characterization of sc-differentiability is helpful in recognizing $sc^1$-maps. In particular, comparing the properties of the shift map $\varphi$ that we collected in Section~\ref{sec:classical-differentiability} with the conditions in Proposition \ref{prop:alternative-characterization-of-sc1} suggests that sc-calculus indeed is a good framework for the problem at hand. 
 
\begin{proposition}[{\cite[Proposition~1.2.1]{HoferWysockiZehnder17}}]
\label{prop:alternative-characterization-of-sc1}
Let $f:\fE\to\fF$ be a sc-continuous map between sc-Hilbert spaces. Then $f$ is $sc^1$ if and only if the following conditions are satisfied:
\begin{enumerate}
\item For every $m\geq 1$, the induced map $f: E_m \to F_{m-1}$ is $\cC^1$; in particular, the map
\begin{align*}
\rd f : E_m &\longrightarrow \mathcal{L}(E_m, F_{m-1}) \\
x &\longmapsto \rd f (x)
\end{align*}
is continuous with respect to the operator norm.
\item For every $m\geq 1$ and $x\in E_m$, the bounded linear operator $\rd f (x) : E_m \to F_{m-1}$ extends to a bounded linear operator $\rd f (x) : E_{m-1} \to F_{m-1}$, and the map
\begin{align*}
E_m \oplus E_{m-1} &\longmapsto F_{m-1} \\
(x,h) &\longmapsto \rd f(x)h
\end{align*}
is continuous.
\end{enumerate}
\end{proposition}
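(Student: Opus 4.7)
The plan is to prove each direction separately, with the bulk of the work in the forward implication. For the forward direction ($sc^1\Rightarrow$~(1) and~(2)), my strategy is to extract condition~(2) directly from the definition of sc-continuity applied to the tangent map, and then bootstrap it to condition~(1) via compactness of the sc-embeddings. Unpacking sc-continuity of $Tf:\fE^1\oplus\fE\to\fF^1\oplus\fF$ at scale $m-1$ immediately yields that $(x,h)\mapsto\rd f(x)h$ is continuous from $E_m\oplus E_{m-1}$ to $F_{m-1}$; this is exactly condition~(2), and it incidentally exhibits the already-defined operator $\rd f(x):E_0\to F_0$ as restricting to the bounded operator $E_{m-1}\to F_{m-1}$ advertised in the statement. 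To upgrade to condition~(1) I would first establish operator-norm continuity of $\rd f:E_m\to\cL(E_m,F_{m-1})$ by a standard subsequence argument: given $x_n\to x$ in $E_m$ and hypothetical unit vectors $h_n\in E_m$ realizing a nonvanishing error, the compact embedding $E_m\hookrightarrow E_{m-1}$ yields an $E_{m-1}$-convergent subsequence, and the joint continuity from~(2) then forces the error to zero, a contradiction. For Fréchet differentiability of $f:E_m\to F_{m-1}$, the case $m=1$ is immediate from the sc-differentiability hypothesis itself, since its limit is precisely the classical Fréchet-differentiability limit for $f:E_1\to F_0$ at $x\in E_1$. For $m\geq 2$ I would invoke the fundamental theorem of calculus: by the $m=1$ case the identity $f(x+h)-f(x)=\int_0^1 \rd f(x+th)h\,\rd t$ already holds in $F_0$, and for $x,h\in E_m$ the integrand is continuous into $F_{m-1}$ by~(2), so the identity lifts to $F_{m-1}$; the standard Taylor-remainder estimate combined with the operator-norm continuity just proved then gives the required limit.

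The reverse direction ((1) and~(2)~$\Rightarrow sc^1$) is essentially bookkeeping. The sc-continuity of $f$ is assumed. For $x\in E_1$, condition~(1) with $m=1$ supplies a Fréchet derivative $\rd f(x)\in\cL(E_1,F_0)$, and condition~(2) with $m=1$ extends it (uniquely, by density of $E_1$ in $E_0$) to a bounded operator $E_0\to F_0$; the sc-differentiability limit at level~$0$ is then just the classical Fréchet-differentiability limit. Finally, sc-continuity of $Tf(x,h)=(f(x),\rd f(x)h)$ at each scale $m$ splits into sc-continuity of $f$ on the first factor and condition~(2) with index $m+1$ on the second factor.

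The main obstacle is the forward direction at levels $m\geq 2$, where the sc-differentiability limit (which naturally lives in the coarsest norms $F_0$ and $\|\cdot\|_{E_1}$) must be simultaneously promoted to $F_{m-1}$ and to $\|\cdot\|_{E_m}$. Both promotions hinge on the compactness of the sc-embeddings $E_m\hookrightarrow E_{m-1}$, which is the structural feature of sc-calculus that resolves the chicken-and-egg situation between differentiability and operator-norm continuity at the higher scale.
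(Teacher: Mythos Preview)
The paper does not contain its own proof of this proposition; it is stated with a citation to \cite[Proposition~1.2.1]{HoferWysockiZehnder17} and used as a black box. So there is no in-paper argument to compare against.

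That said, your proposal is a correct and essentially complete sketch, and it follows the standard route taken in the Hofer--Wysocki--Zehnder reference. The key structural points are all present: extracting condition~(2) directly from sc-continuity of $Tf$ at level $m-1$; deducing operator-norm continuity of $\rd f:E_m\to\cL(E_m,F_{m-1})$ from~(2) via a compactness-and-contradiction argument on the unit vectors $h_n$; and lifting the Fr\'echet-differentiability remainder estimate from $F_0$ to $F_{m-1}$ by the integral identity $f(x+h)-f(x)=\int_0^1\rd f(x+th)h\,\rd t$, whose integrand is $F_{m-1}$-continuous by~(2). The reverse direction is, as you say, bookkeeping, and your uniqueness-by-density remark for the extension $\rd f(x):E_0\to F_0$ is the right justification. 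One small point worth making explicit in a full write-up: when you assert that the $F_0$-valued integral identity ``lifts'' to $F_{m-1}$, you are implicitly using that both sides already lie in $F_{m-1}$ (the left side by sc-continuity of $f$, the right side because the integrand is $F_{m-1}$-continuous) together with injectivity of $F_{m-1}\hookrightarrow F_0$; this is routine but should be said.
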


In practice, when working with differentiability, one usually relies on a chain rule. From the definition of sc-differentiability it is not obvious that there should be a chain rule in sc-calculus: After all, in the definition of sc-differentiability there is a shift in levels, and we might expect these shifts to add up when we concatenate maps. However, Hofer--Wysocki--Zehnder showed that sc-differentiability satisfies a true chain rule without any shift in levels:

\begin{theorem}[chain rule, {\cite[Theorem~1.3.1]{HoferWysockiZehnder17}}]
Let $f: \fE \to \fF$ and $g: \fF \to \fG$ be $sc^1$-maps. Then $g\circ f: \fE \to \fG$ is also $sc^1$, and the tangent map satisfies $T(g\circ f)= Tg \circ Tf$.
\end{theorem}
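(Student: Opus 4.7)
The plan is to verify the three conditions in the definition of $sc^1$ for $g \circ f$, with candidate derivative
$$\rd(g \circ f)(x) := \rd g(f(x)) \circ \rd f(x) : E_0 \to G_0$$
at each $x \in E_1$. This composition is a bounded linear operator: since $f$ is $sc^0$ we have $f(x) \in F_1$, so $\rd g(f(x)) : F_0 \to G_0$ is defined and bounded, and $\rd f(x) : E_0 \to F_0$ is bounded. Sc-continuity of $g \circ f$ is immediate from sc-continuity of $f$ and $g$.

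The heart of the argument is to establish the differentiability condition at a fixed $x \in E_1$, namely
$$\lim_{\|h\|_{E_1} \to 0} \frac{\bigl\| g(f(x+h)) - g(f(x)) - \rd g(f(x))\,\rd f(x)\, h\bigr\|_{G_0}}{\|h\|_{E_1}} = 0.$$
Setting $k := f(x+h) - f(x) \in F_1$, one starts from the algebraic identity
$$g(f(x+h)) - g(f(x)) - \rd g(f(x))\,\rd f(x)\, h = R_g(f(x), k) + \rd g(f(x))\, R_f(x, h),$$
where $R_f(x, h) := f(x+h) - f(x) - \rd f(x) h$ and $R_g(y, k) := g(y+k) - g(y) - \rd g(y) k$. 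The summand $\rd g(f(x))\, R_f(x, h)$ is $o(\|h\|_{E_1})$ in $G_0$ by sc-differentiability of $f$ at $x$ combined with boundedness of $\rd g(f(x)) : F_0 \to G_0$.

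The principal obstacle is estimating $R_g(f(x), k)$: the direct bound $\|R_g(f(x), k)\|_{G_0} = o(\|k\|_{F_1})$, combined with sc-continuity of $f : E_1 \to F_1$, only yields convergence to zero, not the required rate $o(\|h\|_{E_1})$, because $\|k\|_{F_1}/\|h\|_{E_1}$ need not remain bounded. The way around this is to invoke Proposition~\ref{prop:alternative-characterization-of-sc1} applied to $g$, which implies $g : F_1 \to G_0$ is $\cC^1$, so the fundamental theorem of calculus along the segment in $F_1$ from $f(x)$ to $f(x+h)$ gives
$$R_g(f(x), k) = \int_0^1 A_s\, k\, \rd s, \qquad A_s := \rd g(f(x) + sk) - \rd g(f(x)) \in \cL(F_0, G_0),$$
and one decomposes $k = \rd f(x) h + R_f(x, h)$. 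The crucial input is the joint continuity of $(y, v) \mapsto \rd g(y) v$ from $F_1 \oplus F_0 \to G_0$ (Proposition~\ref{prop:alternative-characterization-of-sc1}(2)): since $sk \to 0$ in $F_1$ uniformly in $s \in [0,1]$ as $\|h\|_{E_1} \to 0$, this gives $A_s v \to 0$ in $G_0$ for each fixed $v \in F_0$ uniformly in $s$, and a straightforward subsequence argument upgrades this to uniform convergence for $v$ in any compact subset of $F_0$. Applied to $v = R_f(x, h)/\|h\|_{E_1} \to 0$ in $F_0$, this yields $A_s R_f(x, h) = o(\|h\|_{E_1})$. Applied to $\rd f(x) h$, one exploits the compact inclusion $E_1 \hookrightarrow E_0$: the unit ball of $E_1$ is relatively compact in $E_0$, and continuity of $\rd f(x) : E_0 \to F_0$ sends it to a relatively compact subset of $F_0$; uniform convergence of $A_s$ on that compact set yields $A_s \rd f(x) h = o(\|h\|_{E_1})$ uniformly in $s$. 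Combining both pieces, $\|R_g(f(x), k)\|_{G_0} \leq \sup_s \|A_s k\|_{G_0} = o(\|h\|_{E_1})$ as required.

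Finally, the identity $T(g \circ f) = Tg \circ Tf$ follows directly from the chain formula for $\rd(g \circ f)$, and this composition is sc-continuous as a composition of the sc-continuous tangent maps $Tf$ and $Tg$; this verifies the third condition of $sc^1$ and simultaneously gives the tangent map identity in the statement.
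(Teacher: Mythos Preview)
The paper does not give its own proof of this statement; it is quoted from \cite[Theorem~1.3.1]{HoferWysockiZehnder17} without argument. Your proof is correct and is essentially the standard Hofer--Wysocki--Zehnder argument: the integral representation via the $\cC^1$-property of $g:F_1\to G_0$, the decomposition $k=\rd f(x)h+R_f(x,h)$, and, most importantly, the use of the compact inclusion $E_1\hookrightarrow E_0$ to force $\rd f(x)(h/\|h\|_{E_1})$ into a relatively compact subset of $F_0$, on which the joint continuity from Proposition~\ref{prop:alternative-characterization-of-sc1}(2) then yields the needed uniform smallness of $A_s$. That compactness step is exactly the mechanism by which the level shift is absorbed, and you have identified and used it correctly.
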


Let us go back to the map $s$ that cuts out delay orbits. The implicit function theorem in sc-calculus is formulated in the language of sections in strong M-polyfold bundles (that admit sc-smooth bump functions). To translate $s$ into this language we define
\begin{align}
\begin{split}
S : \mathbb{R} \times \fH^1 &\longrightarrow \mathbb{R} \times \fH^1 \times \fH \\
(\tau,x) &\longmapsto (\tau,x, s(\tau,x)).
\end{split}
\label{eq:def-of-S-as-section}
\end{align}
The map $S$ is a section in the trivial sc-Hilbert space bundle $ \mathbb{R}\times \fH^1 \times \fH \longrightarrow \mathbb{R} \times \fH^1$. 

\begin{remark}
\label{rmk:tame-strong-bundle}
As pointed out above, scale Hilbert spaces are trivially M-polyfolds. In fact, they admit global charts and do not require retractions.  Moreover, the trivial bundle $ \mathbb{R}\times \fH^1 \times \fH \longrightarrow \mathbb{R} \times \fH^1$ is a \textit{strong} bundle in the sense of \cite[Definitions 2.6.1, 2.6.2, 2.6.4, 2.6.5]{HoferWysockiZehnder17}. The map $s$ is the \textit{principal part}  of $S$, see \cite[Definition 2.6.3]{HoferWysockiZehnder17}. Finally, since we do not need to consider boundary nor retractions, the \textit{tameness} condition defined in \cite[Definitions 2.5.2, 2.5.7]{HoferWysockiZehnder17} is trivially satisfied. Thus, the bundle $ \mathbb{R}\times \fH^1 \times \fH \longrightarrow \mathbb{R} \times \fH^1$ is a tame strong M-polyfold bundle. Since everything is modeled on sc-Hilbert spaces, these M-polyfolds automatically admit sc-smooth bump functions.
\end{remark}

\begin{proposition}
\label{prop:section-sc-smooth-and-sc-differential}
The section $S$ is sc-smooth.
Its vertical sc-differential at  the point $(\tau,x)\in (\mathbb{R}\times \fH^1)_1 = \R \times H_2$ is
\begin{align}
\rd s(\tau,x) : \mathbb{R} \times H_1 &\longrightarrow H_0 \nonumber\\
(T,\hat{x}) &\longmapsto \partial_t\hat{x} - \rd X(\varphi(\tau,x)) \cdot \varphi(\tau,\hat{x}) +T\cdot \rd X(\varphi(\tau,x))\cdot \varphi(\tau,\partial_t x).
\label{eq:formula-for-ds}
\end{align}
In particular, at $(0,x)\in (\mathbb{R}\times \fH^1)_1$ this simplifies to
\begin{align}
\rd s(0,x) (T,\hat{x}) = \partial_t\hat{x} - \rd X(x) \cdot \hat{x} +T\cdot \rd X(x)\cdot \partial_t x.
\label{eq:formula-for-ds(0,x0)}
\end{align}
\end{proposition}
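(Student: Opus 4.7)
The plan is to decompose the principal part $s(\tau,x)=\partial_t x - X(\varphi(\tau,x))$ into sc-smooth pieces and apply the sc-chain rule. Sc-smoothness of the section $S$ reduces to sc-smoothness of $s$, since the identity/projection components of $S$ are level-preserving bounded linear maps and hence sc-smooth.

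I would write $s=\partial_t-\Psi\circ\varphi$, where $\Psi$ is the Nemytskii operator $\Psi(y)(t):=X_t(y(t))$, and treat each factor separately. The differentiation $(\tau,x)\mapsto\partial_t x$ is bounded linear and level-preserving as a map $\R\times\fH^1\to\fH$, hence automatically sc-smooth. The shift $\varphi:\R\times\fH\to\fH$ is sc-smooth by Frauenfelder--Weber \cite{FrauenfelderWeber18}; the same argument applied with all indices shifted up by one yields sc-smoothness of $\varphi:\R\times\fH^1\to\fH^1$. For $\Psi:\fH^1\to\fH$ I would verify the alternative characterization of Proposition~\ref{prop:alternative-characterization-of-sc1}: at each level $m\geq 0$ the induced map $\Psi:H_{m+1}\to H_{m+1}\hookrightarrow H_m$ is classically smooth because $W^{m+1,2}(S^1)$ is a Banach algebra continuously embedded into $C^m(S^1)$ and $X$ is smooth, and the differential $\rd\Psi(y)\hat y=\rd X(y)\cdot\hat y$ extends from $\hat y\in H_{m+1}$ to $\hat y\in H_m$ because $\rd X(y)\in C^m$ acts by pointwise multiplication on $H_m$, with joint continuity in $(y,\hat y)$ coming from continuity of $y\mapsto\rd X(y)$ from $H_{m+1}$ to $C^m$.

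Combining these ingredients via the sc-chain rule gives sc-smoothness of $s$, and hence of $S$. For the formula of the vertical sc-differential, the chain rule produces
\begin{align*}
\rd s(\tau,x)(T,\hat x)=\partial_t\hat x-\rd\Psi(\varphi(\tau,x))\cdot\rd\varphi(\tau,x)(T,\hat x);
\end{align*}
substituting $\rd\Psi(y)\hat y=\rd X(y)\cdot\hat y$ together with the expression for $\rd\varphi$ from Lemma~\ref{lem:derivative-of-shift-map} immediately yields~\eqref{eq:formula-for-ds}. The simplification \eqref{eq:formula-for-ds(0,x0)} at $\tau=0$ is then immediate from $\varphi(0,\cdot)=\operatorname{id}$.

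The main obstacle is the sc-smoothness of the Nemytskii operator, and specifically condition (2) of Proposition~\ref{prop:alternative-characterization-of-sc1} together with its higher-order analogues for the iterated tangent maps. This reduces to standard Sobolev-algebra/composition estimates in one dimension: for $y\in H_{m+1}$ the coefficient $\rd X(y)$ (and, inductively, its higher derivatives in $y$) lies in $C^m$ and depends continuously on $y$, while multiplication of an $H_m$-function by such a coefficient is bounded and jointly continuous. Everything else is bookkeeping with the chain rule and the already-established sc-smoothness of $\varphi$.
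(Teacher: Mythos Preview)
Your proposal is correct and follows essentially the same route as the paper: decompose $s$ into $\partial_t$, the shift $\varphi$, and the Nemytskii operator, cite Frauenfelder--Weber for sc-smoothness of $\varphi$, and apply the sc-chain rule to obtain both sc-smoothness and the formula~\eqref{eq:formula-for-ds}. The paper's proof is in fact briefer than yours: it simply asserts that $x\mapsto X(x)$ is sc-smooth on $\fH$ because $X$ is smooth, whereas you sketch a verification via Proposition~\ref{prop:alternative-characterization-of-sc1} and Sobolev-algebra estimates---this extra care is justified, since at level $0$ the Nemytskii operator on $L^2$ is not automatically well-behaved, and your formulation $\Psi:\fH^1\to\fH$ (landing only where the input has at least one derivative) sidesteps that issue cleanly.
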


\begin{remark}
\label{rmk:dX}
Note that by $\rd X(y)$  we mean the map
\begin{align*}
\rd X (y) : S^1 &\longrightarrow \R^n \\
t &\longmapsto \rd X_t (y(t)).
\end{align*}
Since $S^1$ is compact and $X$ is smooth, $\rd X (y)$ has the same Sobolev regularity as $y$.
\end{remark}

\begin{proof}[Proof of Proposition~\ref{prop:section-sc-smooth-and-sc-differential}]
First, we observe that the operator
$
\partial_t : \fH^1 \rightarrow \fH
$
is sc-smooth. Indeed, for every $m$, the operator $\partial_t:H_{m+1}\to H_m$ is a bounded linear map, in particular it is classically smooth. Thus, by \cite[Proposition~1.2.4]{HoferWysockiZehnder17}, $\partial_t$ is sc-smooth.

Next, we use \cite[Theorem~6.1]{FrauenfelderWeber18} asserting that the shift map $\varphi:\mathbb{R} \times \fH \rightarrow\fH$ is sc-smooth. This readily implies that $\varphi$ is sc-smooth also as a map
$\varphi : \mathbb{R} \times \fH^1 \rightarrow \fH^1 \hookrightarrow \fH$ since the inclusion $\fH^1\hookrightarrow\fH$ is level-wise compact.

Since the vector field $X$ is smooth, the map $\fH\ni x\mapsto X(x)\in\fH$ is sc-smooth. Now the chain rule from scale calculus implies that $S$ is sc-smooth, and it also gives the formula for the derivative. Here we may use the fact that $\varphi$ is classically $\cC^1$  and therefore $sc^1$ and its sc-differential agrees with the classical differential given by formula \eqref{eq:formula-derivative-of-phi}.
\end{proof}

\section{The sc-Fredholm property}
\label{sec:sc-fredholm-property}

While the definition of the nonlinear sc-Fredholm property is quite involved (see below), linear sc-Fredholm operators are defined in a straightforward way:
\begin{definition}[{\cite[Definition~1.1.9]{HoferWysockiZehnder17}}]
\label{def:linear-sc-Fredholm}
A sc-continuous linear operator $T: \fE \to \fF$ is a \textit{sc-Fredholm operator} if there are splittings $\fE = \fK \oplus \fX$ and $\fE = \fE \oplus \fY$ respecting the sc-structure such that the following holds:
\begin{itemize}
\item $\fK$ is the kernel of $T$ and finite dimensional.
\item $\fY$ is the image of $T$ and $\cC$ is finite dimensional.
\item $T: \fX \to \fY$ is a sc-isomorphism.
\end{itemize}
The \textit{Fredholm index} of $T$ is the integer $\textup{ind}(T) := \dim \fK - \dim \fC$.
\end{definition}

Another characterization of the linear sc-Fredholm property is the following.

\begin{lemma}[{\cite[Lemma~3.6]{Wehrheim12}}]
\label{lem:linear-sc-Fredholm}
A sc-continuous linear operator $T: \fE \to \fF$ is a sc-Fredholm operator if and only if it is regularizing (that is, if $e\in E_0$ and $T(e)\in F_m$, then $e\in E_m$) and $T: E_0 \to F_0$ is a classical Fredholm operator.
\end{lemma}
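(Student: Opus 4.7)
The plan is to prove the two implications separately. The forward direction is essentially unpacking the sc-splittings, while the backward direction requires constructing such splittings from the classical Fredholm data plus regularity.

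For the forward direction, assume the sc-splittings $\fE = \fK \oplus \fX$ and $\fF = \fC \oplus \fY$ exist. That $T: E_0 \to F_0$ is classically Fredholm is immediate: $K_0$ is finite-dimensional, $Y_0$ is closed as a topological complement of the finite-dimensional $C_0$ in $F_0$, and $\operatorname{coker}(T|_{E_0}) \cong C_0$. For the regularizing property, take $e \in E_0$ with $T(e) \in F_m$ and decompose $e = k + x$ with $k \in K_0$, $x \in X_0$. Since $\fK$ is finite-dimensional its filtration is constant, so $k \in E_m$ automatically. Now $T(e) = T(x)$ lies in $Y_0 \cap F_m$, and the sc-splitting $F_m = C_m \oplus Y_m$ combined with uniqueness of the $F_0$-decomposition forces $T(x) \in Y_m$. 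The sc-isomorphism $T: \fX \to \fY$ then yields $x \in X_m \subseteq E_m$, hence $e \in E_m$.

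For the backward direction, set $K := \ker(T: E_0 \to F_0)$ and $Y := \operatorname{im}(T: E_0 \to F_0)$. The regularizing hypothesis forces $K \subseteq E_\infty$, so equipping $K$ with the constant filtration yields a valid sc-subspace $\fK$. To build the complement $\fX$, pick a continuous linear projection $\ell: E_0 \to K$ (available since $K$ is finite-dimensional) and set $X_m := \ker\ell \cap E_m$. The inclusions $X_{m+1} \hookrightarrow X_m$ are compact by restriction, and density follows by approximating $x \in X_m$ with $x_n \in E_{m+1}$ and correcting to $x_n - \ell(x_n) \in X_{m+1}$, noting that $\ell(x_n) \to \ell(x) = 0$ in $E_m$. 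Analogously, choose a complement $C$ of $Y$ in $F_0$ and, using density of $F_\infty \subseteq F_0$, perturb a basis of $C$ so that $C \subseteq F_\infty$; then $Y_m := Y \cap F_m$ together with the constant filtration on $C$ yields the splitting $F_m = C \oplus Y_m$, with density of $Y_{m+1}$ in $Y_m$ proved by the same subtract-the-projection trick applied to the continuous projection $\pi_C: F_m \to C$.

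Finally, to check that $T: \fX \to \fY$ is a sc-isomorphism, injectivity at each level is inherited from injectivity on $X_0$; surjectivity uses regularizing, since the unique $X_0$-preimage $x$ of any $y \in Y_m$ satisfies $T(x) \in F_m$, hence $x \in E_m$ and so $x \in X_m$. Continuity at each level is immediate from sc-continuity of $T$, and continuity of the inverse is provided by the open mapping theorem applied to the resulting continuous bijection on each level. The main obstacle is the construction of the splittings themselves: the complements must be compatible with every level simultaneously, which hinges on finite-dimensionality of $K$ and $C$ (so that projections are continuous on every level and bases can be perturbed into $E_\infty$ and $F_\infty$ respectively) and on the subtract-the-finite-dimensional-part trick to upgrade density from the ambient sc-spaces to the subspaces.
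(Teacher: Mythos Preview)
The paper does not supply its own proof of this lemma; it is quoted verbatim from \cite{Wehrheim12} and used as a black box. So there is nothing in the paper to compare against.

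That said, your argument is correct and is the standard one. The forward direction is routine. In the backward direction you handle the two genuine issues: (1) forcing the finite-dimensional pieces $K$ and $C$ to lie in $E_\infty$ and $F_\infty$ respectively---the kernel automatically via regularizing, the cokernel complement by perturbing a basis using density---and (2) transferring the compact/dense inclusions from the ambient spaces to the complements $X_m$ and $Y_m$ via the subtract-the-projection trick, which works precisely because the projections onto $K$ and $C$ are continuous on every level (finite rank, factoring through $E_0$ resp.\ $F_0$). The verification that $T:\fX\to\fY$ is a levelwise isomorphism, with surjectivity coming from regularizing and continuity of the inverse from the open mapping theorem, is also fine.
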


The linear sc-Fredholm property is invariant under a class of perturbations called $sc^+$-perturbations. Kernel and cokernel of sc-Fredholm operators consist of smooth points.

In classical calculus, a map is defined to be Fredholm if its linearization at any point is a Fredholm operator. This implies the existence of a contraction normal form which can be used to prove the implicit function theorem for Fredholm maps (see \cite[Remark~4.2]{Wehrheim12}).
Hence, one might try to define sc-Fredholm maps as sc-smooth maps with differentials that are linear sc-Fredholm operators. Again, this implies the existence of a normal form. However, in sc-calculus this normal form does not necessarily involve a contraction on the whole space, rather a contraction from one level to another (see \cite[Remark~4.2]{Wehrheim12}). In particular, there is no implicit function theorem for this class of maps. Counterexamples and a detailed discussion of these problems can be found in \cite{FilippenkoWehrheimZhou19}.
To obtain an implicit function theorem, one has to restrict to sc-smooth maps satisfying some extra condition. This \textit{nonlinear sc-Fredholm property} was defined by Hofer--Wysocki--Zehnder in terms of a special form (\textit{basic germ}, \cite[Definition~3.1.7]{HoferWysockiZehnder17}) that the map needs to take after $sc^+$-perturbation and a sc-smooth coordinate change (see \cite[Definitions~3.1.11,\;3.1.16]{HoferWysockiZehnder17}). The proof of the M-polyfold implicit function theorem (see Theorem~\ref{thm:sc-IFT} for the statement and \cite{HoferWysockiZehnder17} for the proof) and the counterexamples and discussion in \cite{FilippenkoWehrheimZhou19} suggest that this sc-Fredholm property is exactly what is needed to make an implicit function theorem possible. However, in applications the right sc-smooth coordinate change may be hard to find. Katrin Wehrheim suggested the following alternative definition of a \textit{sc-Fredholm property} (at a point) \textit{with respect to a splitting}:

\begin{definition}[{\cite[Definition 4.3]{Wehrheim12}}]
\label{def:wehrheims-definition}
Let $f: \fE \to \fF$ be a $sc$-smooth map. Then $f$ is \textit{sc-Fredholm at $0$ with respect to the splitting} $\fE = \R^d \oplus \fE'$ if the following holds:
\begin{enumerate}[(i)]
\item $f$ is regularizing as germ, that is for every $m\in\N_0$ there exists $\epsilon_m>0$ such that $f(e)\in F_{m+1} $ and $\Vert e \Vert_{E_m} \leq \epsilon_m$ implies $e \in E_{m+1}$.
\item $\fE = \R^d \oplus \fE'$ is a sc-isomorphism and for every $m\in\N_0$ there exists $\epsilon_m>0$ such that $f(r,\cdot): B_{\epsilon_m}^{E_m'} \to F_m$ is differentiable for all $\Vert r \Vert_{\R^d} < \epsilon_m$. Moreover, for fixed $m\in\N_0$, the differential $\rd_{\fE'} f(r_0,e_0) : E_m' \to F_m$ in direction of $\fE'$ has the following continuity properties:
\begin{enumerate}[(a)]
\item For $r\in B_{\epsilon_m}^{\R^d}$ the differential operator
\begin{align*}
B_{\epsilon_m}^{E_m'} &\longrightarrow \cL(E_m',F_m) \\
e &\longmapsto \rd_{\fE'} f(r,e)
\end{align*}
is continuous, and the continuity is uniform in a neighborhood of $(r,e)=(0,0)$.
\item For sequences $\R^d \ni r_{\nu} \to 0$ and $e_{\nu} \in B_1^{E_m'}$ with $\Vert \rd_{\fE'} f(r_{\nu},0)e_{\nu} \Vert_{F_m} \to 0$, $\nu\to\infty$, there exists a subsequence such that $\Vert \rd_{\fE'} f(0,0)e_{\nu} \Vert_{F_m} \to 0$.
\end{enumerate}
\item The differential $\rd_{\fE'} f(0,0) : \fE' \to \fF$ is a sc-Fredholm operator. Moreover, $\rd_{\fE'} f(r,0) : E'_0 \to F_0$ is classically Fredholm for all $\Vert r \Vert_{\R^d} < \epsilon_0$, with Fredholm index equal to that for $r=0$, and weakly regularizing, i.e.\ $\textup{ker}\rd_{\fE'} f(r,0) \subseteq E_1'$.
\end{enumerate}
\end{definition}

As in \cite[Definition~4.3]{Wehrheim12}, above the sc-Fredholm property is defined only at the origin $(\tau,x)=(0,0)$. At a smooth point $(\tau^*,x^*)\in \R\times\cC^\infty(S^1,\R^n)=\R\times\bigcap_m H_m$ the appropriate conditions are  obtained by conjugation with the sc-smooth map $(\tau,x)\mapsto (\tau-\tau^*,x-x^*)$. 
This definition of the sc-Fredholm property (\textit{with respect to a splitting}) is not equivalent to the original one (\cite[Definitions~3.1.11,\;3.1.16]{HoferWysockiZehnder17}). However, Wehrheim proved the following:

\begin{theorem}[{\cite[Theorem~4.5]{Wehrheim12}}]
\label{thm:wehrheims-theorem}
Let $f: \fE \to \fF$ be a $sc$-smooth map that is sc-Fredholm at $0$ with respect to a splitting $\fE = \R^d \oplus \fE'$. Then $f|_{E_1} : \fE^1 \to \fF^1$ is sc-Fredholm at $0$.
\end{theorem}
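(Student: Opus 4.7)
The plan is to verify the Hofer--Wysocki--Zehnder definition of the sc-Fredholm property for the shifted map $f|_{\fE^1}:\fE^1\to\fF^1$ at the origin; by definition this means producing an $sc^+$-perturbation and a sc-smooth change of coordinates that bring $f|_{\fE^1}$ into the \emph{basic germ} normal form of \cite[Definition~3.1.7]{HoferWysockiZehnder17}.

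The first step is a linear decomposition. Hypothesis (iii) of Definition~\ref{def:wehrheims-definition} gives that $L:=\rd_{\fE'}f(0,0):\fE'\to\fF$ is a linear sc-Fredholm operator, so Definition~\ref{def:linear-sc-Fredholm} yields sc-splittings $\fE'=\fK\oplus\fX$ and $\fF=\fC\oplus\fY$ with $\fK=\ker L$, $\fY=\textup{im}\,L$, with $\fK$ and $\fC$ finite-dimensional and consisting of smooth points, and $L|_{\fX}:\fX\to\fY$ a sc-isomorphism. Choose a linear embedding $\iota:\R^{\dim\fC}\to\fC$ and set $\tilde f(r,k,x,c):=f(r,k,x)+\iota(c)$ on $\R^d\oplus\fK\oplus\fX\oplus\R^{\dim\fC}$. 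Since $\iota$ takes values in smooth points, $(r,k,x,c)\mapsto\iota(c)$ is a genuine $sc^+$-perturbation, and the reduced linearization of $\tilde f$ in the $\fX\oplus\R^{\dim\fC}$-directions at the origin is now a sc-isomorphism onto $\fF$.

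The second step is a level-wise implicit function argument on the $\fY$-component of $\tilde f$. Uniform continuity of the partial differential from Definition~\ref{def:wehrheims-definition}(ii)(a) allows a Banach fixed-point contraction at each level $m$ on a small ball in $X_m$, producing a unique small solution $x=g(r,k,c)$ of $\pi_{\fY}\tilde f(r,k,g(r,k,c),c)=0$. The regularizing property (i) ensures that $g$ respects all levels, while the compactness-like condition (ii)(b) is used to upgrade $g$ to a sc-smooth map. The resulting sc-smooth change of coordinates $\Phi(r,k,x,c):=(r,k,\,x+g(r,k,c),\,c)$ brings $\tilde f\circ\Phi$ into the split form characteristic of a basic germ: a sc-isomorphism on the $\fX$-factor together with a sc-smooth map $\R^d\oplus\fK\oplus\R^{\dim\fC}\to\fC$ on the finite-dimensional factors.

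The final and main step is to verify the \emph{shifted contraction} built into the basic germ condition: the nonlinearity must contract from one sc-level to the previous one with arbitrarily small constant on sufficiently small balls. Here the passage from $\fE$ to $\fE^1$ is essential, since the compact inclusions $E_{m+1}\hookrightarrow E_m$, combined with the level-wise continuity in (ii)(a) and the weak-compactness condition in (ii)(b), upgrade the classical differentiability estimate to the strong shifted Lipschitz estimate HWZ demand. I expect this last step to be the main obstacle: Wehrheim's sc-Fredholm property with respect to a splitting is strictly weaker than the basic-germ condition, and the entire content of Theorem~\ref{thm:wehrheims-theorem} is that a single sc-level shift supplies exactly the missing compactness.
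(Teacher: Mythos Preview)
The paper does not prove this theorem; it is quoted verbatim from \cite[Theorem~4.5]{Wehrheim12} and used as a black box. There is therefore no ``paper's own proof'' to compare your proposal against---the authors rely entirely on Wehrheim's argument and only invoke the statement to justify that checking Definition~\ref{def:wehrheims-definition} suffices for their application of the M-polyfold implicit function theorem.

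That said, your sketch is a reasonable outline of the strategy Wehrheim actually employs: reduce to a basic germ by splitting off kernel and cokernel, solve the infinite-dimensional part by a contraction argument, and use the level shift $\fE\to\fE^1$ to obtain the contraction estimate required by \cite[Definition~3.1.7]{HoferWysockiZehnder17}. Two points in your write-up are underspecified. First, the coordinate change $\Phi$ you propose does not by itself produce the basic germ form; one still needs to postcompose with the inverse of the linear sc-isomorphism on $\fX\to\fY$ and verify that the remaining nonlinear term satisfies the \emph{level-shifted} contraction estimate, not merely level-wise differentiability. Second, upgrading $g$ from level-wise $\cC^1$ to sc-smooth is not a consequence of (ii)(b) alone---Wehrheim handles this by showing the contraction germ property directly rather than by first building a sc-smooth $g$. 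In her argument the role of (ii)(b) is to control the family of Fredholm operators $\rd_{\fE'}f(r,0)$ uniformly in $r$, which is what allows the single contraction map to work on all levels simultaneously after the shift. Your last paragraph correctly identifies this as the crux.
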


In the implicit function theorem, in the end one is interested only in the zero set $\{f=0\}$ of a given sc-Fredholm map $f$, and this zero set is then automatically contained in the set $E_{\infty}$ of smooth points. Therefore the shift in scales occuring in Theorem~\ref{thm:wehrheims-theorem} is irrelevant for the conclusions of the implicit function theorem. This means that, although the two definitions are not strictly equivalent, in practice one can choose which one to work with.

Let us now recall the section $S: \R\times \fH^1\to \R\times \fH^1\times\fH$ which was defined in \eqref{eq:def-of-s-as-sc-map} and \eqref{eq:def-of-S-as-section} via its principal part $s:\R\times \fH^1\to\fH$, $s(\tau,x) = \partial_t x - X(\varphi(\tau,x))$. 
We show here that $S$ is sc-Fredholm in the sense of Definition~\ref{def:wehrheims-definition}, keeping in mind that this implies that -- at least after restricting $S$ to a map $\R\times \fH^2\to \R\times \fH^2\times\fH^1$ -- it is also sc-Fredholm in the sense of Hofer--Wysocki--Zehnder.

\begin{theorem}
\label{thm:sc-Fredholm}
The section $S$ is a sc-Fredholm section.
\end{theorem}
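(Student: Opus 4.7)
\medskip

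The plan is to verify the three conditions of Wehrheim's Definition~\ref{def:wehrheims-definition} using the splitting $\R \times \fH^1 = \R \oplus \fH^1$, so that $d = 1$ and $\fE' = \fH^1$. Since the sc-Fredholm property is local and the criterion is stated at the origin, I would verify it at an arbitrary smooth point $(\tau^*, x^*) \in \R \times \cC^\infty(S^1, \R^n)$ by conjugating with the sc-smooth translation $(\tau, x) \mapsto (\tau - \tau^*, x - x^*)$. The three main ingredients are: that $\varphi(\tau, \cdot)$ is an isometry on each $H_m$ for fixed $\tau$, that $\varphi$ is continuous after evaluation by Lemma~\ref{lem:shift-map-continuous-after-evaluation}, and that composition with the smooth vector field $X$ preserves each Sobolev class $H_m$ and is classically smooth between levels.

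For condition (i), suppose $s(\tau, x) \in H_{m+1}$ with $(\tau, x)$ close to $(\tau^*, x^*)$ in $\R \times H_{m+1}$. Rearranging gives $\partial_t x = s(\tau, x) + X(\varphi(\tau, x))$; since $x \in H_{m+1}$, the shift $\varphi(\tau, x)$ lies in $H_{m+1}$, and smooth composition yields $X(\varphi(\tau, x)) \in H_{m+1}$, whence $\partial_t x \in H_{m+1}$ and $x \in H_{m+2}$. For (ii)(a), the formula from Proposition~\ref{prop:section-sc-smooth-and-sc-differential},
\begin{align*}
\rd_{\fH^1} s(\tau, x)\,\hat x = \partial_t \hat x - \rd X(\varphi(\tau, x)) \cdot \varphi(\tau, \hat x),
\end{align*}
shows that $s(\tau, \cdot) : H_{m+1} \to H_m$ is classically smooth for each fixed $\tau$ (no differentiation in $\tau$ appears), with uniform continuity estimates in $\tau$ near $\tau^*$ because the shift is an isometry on every level.

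The main obstacle is condition (ii)(b). Given $\tau_\nu \to \tau^*$ and $\hat\xi_\nu$ bounded in $H_{m+1}$ with $\lVert \rd_{\fH^1} s(\tau_\nu, x^*)\hat\xi_\nu \rVert_{H_m} \to 0$, I would split
\begin{align*}
[\rd_{\fH^1} s(\tau_\nu, x^*) - \rd_{\fH^1} s(\tau^*, x^*)]\,\hat\xi_\nu &= -\rd X(\varphi(\tau_\nu, x^*))\,[\varphi(\tau_\nu, \hat\xi_\nu) - \hat\xi_\nu] \\
&\quad - [\rd X(\varphi(\tau_\nu, x^*)) - \rd X(x^*)]\,\hat\xi_\nu.
\end{align*}
The second term tends to $0$ in $H_m$ since $x^*$ is smooth, hence $\varphi(\tau_\nu, x^*) \to x^*$ in every $\cC^k$, and multiplication with the bounded sequence $\hat\xi_\nu$ in $H_{m+1}$ is controlled in $H_m$ by a Sobolev product estimate. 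For the first term, I would extract a subsequence with $\hat\xi_\nu \to \hat\xi$ strongly in $H_m$ using the compact embedding $H_{m+1} \hookrightarrow H_m$; then Lemma~\ref{lem:shift-map-continuous-after-evaluation} applied to $\varphi(\tau_\nu, \hat\xi_\nu) - \hat\xi = \varphi(\tau_\nu, \hat\xi_\nu - \hat\xi) + (\varphi(\tau_\nu, \hat\xi) - \hat\xi)$ yields convergence to $0$ in $H_m$ of $\varphi(\tau_\nu, \hat\xi_\nu) - \hat\xi_\nu$, and the difference vanishes.

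Finally, for (iii) the linearization $\rd_{\fH^1} s(\tau, x^*) : H_1 \to H_0$ equals $\partial_t$ plus the operator $\hat\xi \mapsto -\rd X(\varphi(\tau, x^*))\cdot\varphi(\tau, \hat\xi)$, which factors through the compact embedding $H_1 \hookrightarrow H_0$ and is therefore compact. Since $\partial_t : H_1 \to H_0$ is Fredholm of index $0$, so is each $\rd_{\fH^1} s(\tau, x^*)$, with index independent of $\tau$. Weak regularity of the kernel, $\ker \rd_{\fH^1} s(\tau, x^*) \subseteq H_2$, follows from the same bootstrap as in (i): if $\partial_t \hat\xi = \rd X(\varphi(\tau, x^*)) \cdot \varphi(\tau, \hat\xi)$ with $\hat\xi \in H_1$, then the right-hand side lies in $H_1$ by smoothness of $x^*$ and $X$, so $\hat\xi \in H_2$. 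Iterating this bootstrap gives the regularizing property at $\tau^*$, and together with the classical Fredholm property it yields sc-Fredholm via Lemma~\ref{lem:linear-sc-Fredholm}, completing the verification.
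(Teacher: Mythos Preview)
Your proposal is correct and follows essentially the same approach as the paper: verify Wehrheim's three conditions for the splitting $\R \oplus \fH^1$ using the regularizing bootstrap, the isometry property of $\varphi(\tau,\cdot)$, the compact embedding $H_{m+1}\hookrightarrow H_m$ together with Lemma~\ref{lem:shift-map-continuous-after-evaluation} for (ii)(b), and the $\partial_t$-plus-compact decomposition for (iii). One small slip: your telescoping identity in (ii)(b) is only literally correct when $\tau^*=0$, since $\rd_{\fH^1}s(\tau^*,x^*)\hat\xi = \partial_t\hat\xi - \rd X(\varphi(\tau^*,x^*))\cdot\varphi(\tau^*,\hat\xi)$ still carries the shift $\varphi(\tau^*,\cdot)$; the paper's general-point version replaces $\hat\xi_\nu$ and $x^*$ by $\varphi(\tau^*,\hat\xi_\nu)$ and $\varphi(\tau^*,x^*)$ in the two terms, after which your argument goes through verbatim.
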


\begin{proof}
We first show that $S$ is sc-Fredholm at $(\tau,x)=(0,0)$ with respect to a splitting by checking conditions (i), (ii), and (iii) of Definition~\ref{def:wehrheims-definition}. After this we revisit the case of a general smooth point.

As a splitting, in the sense of Wehrheim, of the domain $\mathbb{R}\times \fH^1$ we take the one induced by the Cartesian product. In particular, we have $d=1$.

\begin{enumerate}[(i)]

\item First we show that $s$ is regularizing. Take $(\tau,x)\in (\mathbb{R} \times \fH^1)_m = \mathbb{R} \times H_{m+1}$ with 
\begin{align*}
s(\tau, x) = \partial_t x - X(\varphi(\tau,x))\in H_{m+1}.
\end{align*}
Since $x\in H_{m+1}$, we have $\varphi(\tau,x)\in H_{m+1}$ and thus $ X(\varphi(\tau,x))\in H_{m+1}$. This means that $\partial_t x = s(\tau,x) +X(\varphi(\tau,x))$ lies in $H_{m+1}$ and so $x\in H_{m+2}$, thus
\begin{align*}
(\tau,x)\in (\mathbb{R} \times \fH^1)_{m+1}
\end{align*}
as desired.

\item 
For fixed $\tau\in \mathbb{R}$ and $m\in\mathbb{N}$, the map
\begin{align*}
s_{\tau,m} := s(\tau,\cdot) : H_{m+1} &\longrightarrow H_m \\
x &\longmapsto \partial_t x - X(\varphi(\tau,x))
\end{align*}
is clearly classically smooth with differential
\begin{equation}
\begin{aligned}\label{eq:differential-for-fixed-tau}
\rd s_{\tau,m}(x) : H_{m+1} &\longrightarrow H_m \\
\hat{x} &\longmapsto \partial_t\hat{x} - \rd X(\varphi(\tau,x)) \cdot \varphi(\tau,\hat{x}).
\end{aligned}
\end{equation}

\begin{enumerate}[(a)]

\item For fixed $m$ and small $\tau$,
\begin{align*}
&\rd s_{\tau,m}: H_{m+1} \longrightarrow \cL(H_{m+1}, H_m)
\end{align*}
needs to be uniformly continuous in $x$ near $x=0$ (note that non-uniform continuity follows from classical smoothness). 

In more detail, we need to show that for every $\epsilon>0$ there exists some $\delta>0$ such that for all $\Vert x \Vert_{H_{m+1}}<\delta$, for all $x'\in H_{m+1}$ with $\Vert x-x' \Vert_{H_{m+1}}<\delta$, and for all $\hat{x}\in H_{m+1}$ we have
\begin{align*}
\Vert \rd s_{\tau,m}(x)\hat{x} - \rd s_{\tau,m}(x')\hat{x} \Vert_{H_m} \leq \epsilon\cdot \Vert \hat{x} \Vert _{H_{m+1}}.
\end{align*}
Indeed, from equation \eqref{eq:differential-for-fixed-tau} we get the following:
\begin{align*}
\Vert \rd s_{\tau,m}(x)\hat{x} &- \rd s_{\tau,m}(x')\hat{x} \Vert_{H_m} \nonumber\\
&= \Vert \left( \rd X(\varphi(\tau,x')) - \rd X(\varphi(\tau,x)) \right) \cdot \varphi(\tau,\hat{x}) \Vert_{H_m} \nonumber\\
&\leq \Vert \rd X(\varphi(\tau,x')) - \rd X(\varphi(\tau,x)) \Vert
      _{W^{m,2}(S^1,\cL(\mathbb{R}^n,\mathbb{R}^n))} 
      \cdot \underbrace{\Vert \varphi(\tau,\hat{x}) \Vert_{H_{m+1}}}_{=\Vert \hat{x} \Vert_{H_{m+1}}}
\end{align*}
The last estimate follows from the operator norm inequality for fixed $t\in S^1$, $\tau\in \R$ and linear maps on $\R^n$.

For $\delta$ small enough the first factor in this estimate is smaller than $\epsilon$ since $\rd X$ is continuous and $\Vert x-x' \Vert_{H_{m+1}}<\delta$ implies $\Vert \varphi(\tau,x)-\varphi(\tau,x') \Vert_{H_{m+1}}<\delta$ (recall that $\varphi$ is an isometry in its second argument).

\item Suppose we are given a sequence $(\tau_{\nu},\hat{x}_{\nu})_{\nu} \subseteq (\mathbb{R}\times \fH^1)_m$ such that $\tau_{\nu} \rightarrow 0$ and  $\Vert \hat{x}_{\nu} \Vert_{H_{m+1}} <1$ such that
\begin{align*}
\Vert \rd s_{\tau_{\nu}}(0)\hat{x}_{\nu} \Vert_{H_m} \longrightarrow 0.
\end{align*}
Then we need to find a subsequence of $(\hat{x}_{\nu})_{\nu} $ (still denoted by the same symbol) such that
\begin{align*}
\Vert \rd s_{0}(0)\hat{x}_{\nu} \Vert_{H_m} \longrightarrow 0.
\end{align*}

We compute
\begin{align*}
\Vert \rd s_{0}(0)\hat{x}_{\nu} \Vert_{H_m}
&= \Vert \partial_t\hat{x}_{\nu} - \rd X(0)  \cdot \varphi(0,\hat{x}_{\nu}) \Vert_{H_m} \\[1ex]
&\leq\Vert \partial_t\hat{x}_{\nu} 
   - \rd X(0) \cdot \varphi(\tau_{\nu},\hat{x}_{\nu}) \Vert_{H_m}\\
&\phantom{=}  \qquad   + \Vert \rd X(0) \cdot \left( \varphi(\tau_{\nu},\hat{x}_{\nu})
   - \varphi(0,\hat{x}_{\nu}) \right) \Vert_{H_m}.
\end{align*}
The first summand converges to zero by assumption. For the second summand we recall that $\rd X(0)$ is still $t$-dependent: For every $t\in S^1$ it denotes the linear map $\rd X_t(0):\mathbb{R}^n\rightarrow\mathbb{R}^n$. Since $\Vert \hat{x}_{\nu} \Vert_{H_{m+1}} <1$ and the inclusion $H_{m+1} \hookrightarrow H_m$ is compact there exists a subsequence (still denoted by $(\hat{x}_{\nu})_{\nu}$) with $(\hat{x}_{\nu})_{\nu} \rightarrow \hat{x}$ in $H_m$. Taking the corresponding subsequence of $(\tau_{\nu})_{\nu}$, we get by Lemma~\ref{lem:shift-map-continuous-after-evaluation} that $\varphi(\tau_{\nu},\hat{x}_{\nu}) - \varphi(0,\hat{x}_{\nu}) \to 0 $ in $H_m$. Finally, since $\rd X(0)$ is continuous it follows that
\begin{align*}
\Vert \rd X(0) \cdot \left( \varphi(\tau_{\nu},\hat{x}_{\nu})
   - \varphi(0,\hat{x}_{\nu}) \right) \Vert_{H_m} 
   \longrightarrow 0.
\end{align*}
\end{enumerate}

\item  The third condition again consists of several parts.

\begin{enumerate}[(a)]
\item Since $0\in \bigcap_{m\geq 0} H^1_m$, by condition (ii) we have maps $\rd s_{0,m}(0): H^1_m \rightarrow H_m$ for all $m\in\N$. Together they define a $sc^0$-map
\begin{align*}
\rd s_{0}(0) : \fH^1 \longrightarrow \fH.
\end{align*}
We have to show that $\rd s_{0}(0)$
is a linear sc-Fredholm operator (Definition~\ref{def:linear-sc-Fredholm}, Lemma~\ref{lem:linear-sc-Fredholm}), meaning that it is regularizing and classically linear Fredholm at the 0-level.

The regularizing property follows exactly as in (i). 
It remains to show that the operator
\begin{align}
\begin{split}
W^{1,2}(S^1,\mathbb{R}^n)= H_1 &\longrightarrow H_0=L^2(S^1,\mathbb{R}^n) \\
\hat{x} &\longmapsto \partial_t\hat{x} - \rd X(0) \cdot \hat{x}
\end{split}
\label{eq:ds-on-0-level}
\end{align}
has closed image and finite dimensional kernel and cokernel.
The operator $\partial_t$ is Fredholm between these spaces. Indeed, its kernel is the space of constant maps while its image consists of all periodic maps with mean zero. Thus, kernel and cokernel are isomorphic to $\R^n$. Since $H_1 \hookrightarrow H_0$ is a compact embedding, the second term in \eqref{eq:ds-on-0-level} represents a compact operator and thus does not change the Fredholm property.

\item The final condition is that for fixed $\tau$ near $0$, the operator on 0-level
\begin{align*}
\rd s_{\tau}(0) : (\fH^1)_0 =H_1 &\longrightarrow H_0 \\
\hat{x} &\longmapsto \partial_t\hat{x} - \rd X(0) \cdot \varphi(\tau,\hat{x})
\end{align*}
is classically linear Fredholm with the same index as $\rd s_{0}(0)$, and that it is weakly regularizing, meaning
\begin{align*}
\text{ker}\; \rd s_{\tau}(0) \subseteq (\fH^1)_1=H_2
\end{align*}
(as opposed to just $\text{ker}\; \rd s_{\tau}(0) \subseteq H_1$ which holds by definition).

To verify these properties, note that the first term of $\rd s_{\tau}(0)$ is the same in $\rd s_0(0)$ and the second one is still compact. In particular, $\rd s_{\tau}(0)$ is Fredholm of the same index as $\rd s_0(0)$. Now take $\hat{x}\in\text{ker}\;\rd s_{\tau}(0)\subseteq H_1$, then
\begin{align*}
\partial_t\hat{x} = \rd X(0) \cdot \varphi(\tau,\hat{x}).
\end{align*}
Since the shift does not change regularity, the right hand side lies in $H_1$, so $\partial_t\hat{x}\in H_1$ and thus $\hat{x}\in H_2$.
\end{enumerate}
\end{enumerate}

This finishes the proof that $S$ is sc-Fredholm at $(\tau,x)=(0,0)$. Now we review conditions (i)-(iii) from above and see what needs to be changed for the sc-Fredholm property at a general smooth point $(\tau,x)\in \R\times \cC^{\infty}(S^1,\R^n)$. We recall that these conditions are obtained from a conjugation, as mentioned above.

\begin{enumerate}[(i)]

\item The proof of the regularization property for $(0,0)$ can be repeated verbatim at any smooth point $(\tau,x)\in \R\times \cC^{\infty}(S^1,\R^n)$.

\item The proof that  $s_{\tau,m}$ is classically differentiable for every $m$ did not use $\tau=0$ and continues to hold at any $(\tau,x)\in \R\times \cC^{\infty}(S^1,\R^n)$. 

\begin{enumerate}[(a)]

\item In the proof of the uniform continuity of $\rd s_{\tau,m}$ near $x=0$ we neither used that $\tau$ is small nor that $\Vert x \Vert_{H_{m+1}}$ is small. Again, the same proof continues to work.

\item Here we need to consider more generally sequences $(\tau_{\nu},\hat{x}_{\nu})_{\nu} \subseteq (\mathbb{R} \times \fH^1)_m$ with $(\tau_{\nu})_{\nu} \rightarrow \tau$ and $\Vert \hat{x}_{\nu} \Vert_{H_{m+1}} \leq 1$ such that
\begin{align*}
\Vert \rd s_{\tau_{\nu}}(x)\hat{x}_{\nu} \Vert_{H_m} \rightarrow 0
\end{align*}
and we need to find a subsequence of $(\hat{x}_{\nu})_{\nu} $ (still denoted the same way) such that
\begin{align*}
\Vert \rd s_{\tau}(x)\hat{x}_{\nu} \Vert_{H_m} \rightarrow 0.
\end{align*}

The following is a small modification of our previous argument. Again by compactness of the embedding $H_{m+1} \hookrightarrow H_m$ we pick a subsequence $(\hat{x}_{\nu})_{\nu} $ converging in $H_m$ to some $\hat{x}$, and the corresponding subsequence $(\tau_{\nu})_{\nu}$. Again add zero and use the triangle inequality as follows:
\begin{align*}
\Vert \rd s_{\tau}(x)\hat{x}_{\nu} \Vert_{H_m}
&= \Vert \partial_t\hat{x}_{\nu}
   - \rd X(\varphi(\tau,x))  \cdot \varphi(\tau,\hat{x}_{\nu}) \Vert_{H_m} \\[1ex]
&= \Vert \partial_t\hat{x}_{\nu}
   - \rd X(\varphi(\tau_{\nu},x))  \cdot \varphi(\tau_{\nu},\hat{x}_{\nu}) \\
   &\phantom{=\Vert}
   + \rd X(\varphi(\tau_{\nu},x))  \cdot \varphi(\tau_{\nu},\hat{x}_{\nu})
   - \rd X(\varphi(\tau,x))  \cdot \varphi(\tau,\hat{x}_{\nu}) \Vert_{H_m} \\[1ex]
&\leq \underbrace{\Vert \partial_t\hat{x}_{\nu}
   - \rd X(\varphi(\tau_{\nu},x))  \cdot \varphi(\tau_{\nu},\hat{x}_{\nu}) \Vert_{H_m}}
      _{\rightarrow 0 \; \text{by assumption}} \\
   &\phantom{=}
   + \Vert \rd X(\varphi(\tau_{\nu},x))  \cdot \varphi(\tau_{\nu},\hat{x}_{\nu})
   - \rd X(\varphi(\tau,x))  \cdot \varphi(\tau,\hat{x}_{\nu}) \Vert_{H_m} 
\end{align*}

By Lemma~\ref{lem:shift-map-continuous-after-evaluation}
we have $\varphi(\tau_{\nu},x) \rightarrow \varphi(\tau,x)$ in $H_{m+1}$ as well as $\varphi(\tau_{\nu},\hat{x}_{\nu})\rightarrow \varphi(\tau, \hat{x})$ and $\varphi(\tau, \hat{x}_{\nu}) \rightarrow  \varphi(\tau, \hat{x})$ in $H_m$. By continuity of $\rd X$ it follows that
\begin{align*}
\Vert \rd X(\varphi(\tau_{\nu},x))  \cdot \varphi(\tau_{\nu},\hat{x}_{\nu})
   - \rd X(\varphi(\tau,x))  \cdot \varphi(\tau,\hat{x}_{\nu}) \Vert_{H_m}
   \longrightarrow 0.
\end{align*}
\end{enumerate}

\item Since $x\in \cC^{\infty}$ is a smooth point, by (ii) there are linear maps $\rd s_{\tau,m} (x) : H_{m+1} \rightarrow H_m$ for all $m\geq 0$. We have to show that these define a linear sc-Fredholm map
\begin{align*}
\rd s_{\tau}(x) : \fH^1 \longrightarrow \fH
\end{align*}
with Fredholm index not changing under small changes of $\tau$.

We have
\begin{align*}
\rd s_{\tau}(x)\hat{x}
&= \partial_t\hat{x} - \rd X(\varphi(\tau,x)) \cdot \varphi(\tau,\hat{x})
\end{align*}
and so we see that $\rd s_{\tau}(x)$ is of class $sc^0$ and regularizing. For the Fredholm property at the 0-level and the index we use that the second term is still compact. That is, we use that the dependence on $\tau$ is only through compact operators.
\end{enumerate}
This concludes the proof of Theorem \ref{thm:sc-Fredholm}.
\end{proof}

We now compute the Fredholm index of $\rd s$ at some point $(\tau=0,x)$, where $x\in H_2$. The Fredholm index in sc-calculus is by definition the same as the classical Fredholm index at the 0-level. The following computation applies in particular to the solution $x_0$ from Theorem \ref{thm:main-theorem}.

\begin{proposition}
\label{prop:fredholm-index}
The Fredholm index of $\rd s(0,x)$ is equal to $1$.
\end{proposition}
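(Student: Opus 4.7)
The plan is to work at the $0$-level (since the sc-Fredholm index is defined to equal the classical Fredholm index of the operator $\mathbb{R} \times H_1 \to H_0$) and decompose the linear map $\rd s(0,x)$ according to the splitting $\mathbb{R} \times H_1$ of the domain. Recall from \eqref{eq:formula-for-ds(0,x0)} that
\begin{align*}
\rd s(0,x)(T,\hat{x}) = \bigl(\partial_t \hat{x} - \rd X(x)\cdot\hat{x}\bigr) + T\cdot v,
\qquad v := \rd X(x)\cdot\partial_t x \in H_0,
\end{align*}
so $\rd s(0,x) = A_1 + A_2$ where $A_1(T,\hat{x}) := \partial_t\hat{x} - \rd X(x)\cdot\hat{x}$ (which factors through the projection to $H_1$) and $A_2(T,\hat{x}) := T\cdot v$ (which has one-dimensional image).

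The main step is to compute the index of the "$\hat{x}$-piece" $B:\hat{x}\mapsto \partial_t\hat{x} - \rd X(x)\cdot\hat{x}$ as a map $H_1\to H_0$. I would argue exactly as in Step (iii)(a) of the proof of Theorem~\ref{thm:sc-Fredholm}: the differential operator $\partial_t:H_1\to H_0$ is classically Fredholm with $\ker\partial_t \cong \R^n$ (constants) and $\operatorname{coker}\partial_t \cong \R^n$ (represented by constants via mean value), hence $\operatorname{ind}(\partial_t) = 0$. The zeroth-order term $\hat{x}\mapsto \rd X(x)\cdot\hat{x}$ is a bounded multiplication operator $H_1 \to H_1$, composed with the compact embedding $H_1\hookrightarrow H_0$, so it is compact as an operator $H_1\to H_0$. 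By stability of the Fredholm index under compact perturbations, $B$ is Fredholm of index $0$.

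To pass from $B$ to $\rd s(0,x)$, I would use the standard fact that enlarging the domain of a Fredholm operator by a finite-dimensional summand raises the index by the dimension of that summand. Concretely, set $\widetilde{B}(T,\hat{x}) := B(\hat{x})$ on $\mathbb{R}\times H_1\to H_0$; then $\widetilde{B}$ is Fredholm with $\ker\widetilde{B} = \mathbb{R}\oplus\ker B$ and $\operatorname{coker}\widetilde{B} = \operatorname{coker} B$, so $\operatorname{ind}(\widetilde{B}) = \operatorname{ind}(B) + 1 = 1$. Finally, $\rd s(0,x) = \widetilde{B} + A_2$ differs from $\widetilde{B}$ only by the finite-rank (hence compact) operator $A_2$, so
\begin{align*}
\operatorname{ind}(\rd s(0,x)) = \operatorname{ind}(\widetilde{B}) = 1.
\end{align*}

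No step should present a real obstacle; the only thing to be careful about is that the zeroth-order multiplication by $\rd X(x)$ is genuinely a bounded operator on $H_0$ (which follows from Remark~\ref{rmk:dX}, since $x\in H_2$ implies $\rd X(x)\in H_2 \subset L^\infty$), so that the compactness argument via the embedding $H_1\hookrightarrow H_0$ is valid. Everything else is a routine application of Fredholm stability.
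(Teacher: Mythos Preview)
Your proof is correct and follows essentially the same approach as the paper: show that $\hat{x}\mapsto\partial_t\hat{x}-\rd X(x)\hat{x}$ has index $0$ (index of $\partial_t$ plus a compact perturbation), then account for the extra $\R$-factor to obtain index $1$. The only cosmetic difference is that the paper isolates the last step as Lemma~\ref{lem:fredholm-index-change} and proves it by a two-case analysis on whether $v$ lies in the image, whereas you first enlarge the domain trivially to get $\widetilde{B}$ of index $1$ and then absorb the rank-one term $A_2$ as a further compact perturbation.
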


\begin{proof}
The expression 
$$
\rd s(0,x) (T,\hat{x}) = \partial_t\hat{x} - \rd X(x) \cdot \hat{x} +T\cdot \rd X(x)\cdot \partial_t x.
$$ 
was derived in \eqref{eq:formula-for-ds(0,x0)}. The first term is the operator
\begin{align*}
 (\fH^1)_0=H_1 &\longrightarrow H_0 \\
\hat{x} &\longmapsto \partial_t\hat{x}.
\end{align*}
It is Fredholm of index $0$, which was explained above in the proof of Theorem~\ref{thm:sc-Fredholm}, precisely condition (iiia) below equation \eqref{eq:ds-on-0-level}.

The second term of $\rd s(0,x)$, the operator $H_1\ni\hat{x} \mapsto - \rd X(x)\cdot \hat{x}\in H_0$, is compact (by compactness of $H_1\hookrightarrow H_0$) and thus does not change the Fredholm index. It remains to see that adding the third term in $\rd s(0,x)$ does not change the Fredholm property and raises the index by $1$. This follows from Lemma~\ref{lem:fredholm-index-change} below.
\end{proof}

We prove the following obvious statement here for completeness.

\begin{lemma}
\label{lem:fredholm-index-change}
Assume that $f: U \rightarrow V$ is a linear Fredholm operator, and choose some $v\in V$. Then the operator
$F : \mathbb{R} \times U \longrightarrow V$, $(t,u) \mapsto f(u) + t\cdot v $ is Fredholm of index $\textup{ind } F = \textup{ind } f +1$.
\end{lemma}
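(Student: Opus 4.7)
The plan is to case-split according to whether $v$ lies in the image of $f$ or not, and in each case directly identify the kernel and cokernel of $F$ and verify that the image is closed.

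\textbf{Case 1: $v \in \textup{im}(f)$.} Pick $u_0 \in U$ with $f(u_0) = v$. Then $F(t,u) = f(u + tu_0)$, so $\textup{im}(F) = \textup{im}(f)$. In particular, $\textup{im}(F)$ is closed and $\textup{coker}(F) = \textup{coker}(f)$. For the kernel, observe that $(t,u) \in \ker F$ if and only if $u + tu_0 \in \ker f$; the map $(t,u) \mapsto (t, u + tu_0)$ provides a linear isomorphism $\ker F \xrightarrow{\sim} \mathbb{R} \oplus \ker f$. Hence $\dim \ker F = \dim \ker f + 1$ and $\textup{ind } F = \textup{ind } f + 1$.

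\textbf{Case 2: $v \notin \textup{im}(f)$.} Then $\textup{im}(F) = \textup{im}(f) + \mathbb{R} v$ is a one-dimensional extension of the closed finite-codimensional subspace $\textup{im}(f)$, hence is itself closed; moreover $\dim \textup{coker}(F) = \dim \textup{coker}(f) - 1$. If $(t,u) \in \ker F$ then $tv = -f(u) \in \textup{im}(f)$, forcing $t=0$ and $u \in \ker f$. Thus $\ker F = \{0\} \oplus \ker f$, $\dim \ker F = \dim \ker f$, and again $\textup{ind } F = \textup{ind } f + 1$.

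The argument is essentially algebraic and there is no real obstacle; the only point meriting a line of justification is the closedness of $\textup{im}(F)$ in Case 2, which follows from the standard fact that adding a finite-dimensional subspace to a closed subspace of a Banach space yields a closed subspace.
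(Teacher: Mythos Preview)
Your proof is correct and follows essentially the same approach as the paper's: both split into the cases $v\in\textup{im}(f)$ and $v\notin\textup{im}(f)$ and compare kernel and cokernel directly. Your version is slightly more explicit (e.g.\ the isomorphism $\ker F\cong \R\oplus\ker f$ in Case~1 and the remark on closedness of $\textup{im}(F)$ in Case~2), but the argument is the same.
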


\begin{proof}
We consider two cases. If $v=f(u)\in \textup{im }f$, then $\textup{im }F = \textup{im }f$ is still closed of the same codimension and $\textup{ker }F = ( \{0\} \oplus \textup{ker }f ) \oplus (\R\cdot(-1,u))$, thus $\textup{dim}(\textup{ker }F) = \textup{dim}(\textup{ker }f) +1$. In the other case, $v\notin \textup{im }f$, the kernel $\textup{ker }F = \{0\} \oplus \textup{ker }f $ is isomorphic to $\textup{ker }f$ and $\textup{im }F = \textup{im }f \oplus \langle v \rangle$,
therefore $\textup{dim}(\textup{coker }F)= \textup{dim}(\textup{coker }f) -1$.
\end{proof}

\section{Transversality}
\label{sec:transversality}

In order to apply an implicit function theorem, we need transversality of the section $S$ to the zero-section at our given solution, that is surjectivity of the vertical differential $\rd s(0,x_0)$ of $S$ at $(0,x_0)$ with $s(0,x_0)=0$. We now analyze what this condition means for $x_0$. For that, we recall the notion of non-degeneracy of a periodic orbit of a vector field.

\begin{definition}
\label{def:non-degenerate}
Denote the flow of $X$ by $\Phi_X^t$. A 1-periodic orbit $x : S^1 \rightarrow \mathbb{R}^n$ of $X$ is called \textit{non-degenerate} if the linearized time-1-map $\rd\Phi^1_X(x(0))$ does not have $1$ as an eigenvalue.
\end{definition}

\begin{remark}
We do not assume that $X$ is complete. The existence of a 1-periodic orbit $x$ implies that in an open neighborhood of $x(S^1)$ in $\R^n$ the flow $\Phi_X^t$ is defined for $t\in [0,1]$. In particular, the notion of non-degeneracy is well-defined.
\end{remark}

\begin{remark}
\label{rmk:autonomous-implies-degenerate}
If the vector field $X$ is autonomous, i.e.\,$X_t(\cdot)=X(\cdot)$ does not depend on $t\in S^1$, then there are no non-constant, non-degenerate periodic orbits. Indeed, if $x: S^1\rightarrow \mathbb{R}^n$ is a periodic orbit of $X$, then for every $\tau \in\mathbb{R}$ and $t\in S^1$ we have
\begin{align*}
\partial_tx(t-\tau) = X(x(t-\tau)) ,
\end{align*}
so every reparametrization $\varphi(\tau,x)$ of $x$ is also a periodic orbit of $X$.
Using again that $X$ is autonomous, we compute
\begin{align*}
\rd\Phi^1_X\big(x(0)\big) \big(\partial_tx(0)\big) &= \tfrac{\rd}{\rd t}\Big\vert_{t =0} \Phi^1_X(x(t)) \\[.5ex]
&= \tfrac{\rd}{\rd t}\Big\vert_{t =0} x(t) \\[.5ex]
&= \partial_tx,
\end{align*}
and conclude that $\partial_tx$ is an eigenvector of $\rd\Phi^1_X(x(0))$ with eigenvalue $1$.
\end{remark}

Our main goal in this section is to show the following:

\begin{proposition}
\label{prop:nondegenerate-iff-surjective}
The linear map $\rd s_0(x_0) = \rd s (0,x_0)(0,\cdot) : H_1 \to H_0$ is surjective if and only if $x_0$ is non-degenerate.
\end{proposition}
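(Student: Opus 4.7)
The strategy is to reduce surjectivity to injectivity via a Fredholm-index argument, and then to identify the kernel with the $1$-eigenspace of the linearized time-$1$-map $\rd\Phi^1_X(x_0(0))$.

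First I would observe that the operator under consideration is
\begin{align*}
\rd s_0(x_0): H_1 &\longrightarrow H_0 \\
\hat{x} &\longmapsto \partial_t\hat{x}-\rd X(x_0)\cdot\hat{x},
\end{align*}
as follows from formula \eqref{eq:formula-for-ds(0,x0)} by setting $T=0$. Exactly as in the proof of Theorem~\ref{thm:sc-Fredholm} (condition~(iiia) below equation \eqref{eq:ds-on-0-level}), the map $\partial_t:H_1\to H_0$ is Fredholm of index~$0$, and the zeroth-order term $\hat{x}\mapsto \rd X(x_0)\cdot\hat{x}$ is compact because of the compact embedding $H_1\hookrightarrow H_0$. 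Hence $\rd s_0(x_0)$ is Fredholm of index~$0$, and therefore it is surjective if and only if it is injective.

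Next I would analyze $\ker\rd s_0(x_0)$ via the initial value problem for linear ODEs. An element $\hat{x}\in H_1$ lies in the kernel if and only if $\partial_t\hat{x}(t)=\rd X_t(x_0(t))\cdot\hat{x}(t)$ for a.e.\ $t\in S^1$. By standard ODE theory (and Sobolev embedding, since $\hat{x}$ is continuous), any such $\hat{x}$ is the unique solution of this linear ODE with initial value $\hat{x}(0)$, and is in fact smooth. Differentiating the flow identity $\Phi^t_X(x_0(s))|_{s=0}=x_0(t)$ in a direction $v\in\R^n$, the fundamental matrix solution is $t\mapsto \rd\Phi^t_X(x_0(0))$, so $\hat{x}(t)=\rd\Phi^t_X(x_0(0))\cdot\hat{x}(0)$. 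Periodicity $\hat{x}(1)=\hat{x}(0)$ is then equivalent to
\begin{align*}
\rd\Phi^1_X(x_0(0))\cdot\hat{x}(0)=\hat{x}(0),
\end{align*}
i.e.\ $\hat{x}(0)$ is a $1$-eigenvector of the linearized time-$1$-map. Thus $\ker\rd s_0(x_0)=\{0\}$ if and only if $\rd\Phi^1_X(x_0(0))$ has no $1$-eigenvector, which by Definition~\ref{def:non-degenerate} is precisely non-degeneracy of $x_0$. Combining this with the index-$0$ observation completes the argument.

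The main obstacle is nothing deep: one simply has to justify that kernel elements of $\rd s_0(x_0)$, which a priori are only in $H_1$, are actually classical $\cC^1$-solutions of the linear ODE whose dynamics is governed by the linearized flow along $x_0$. This follows from the regularizing property already established for $s$ (and its linearization) in the proof of Theorem~\ref{thm:sc-Fredholm}, together with the Sobolev embedding $H_1\hookrightarrow C^0(S^1,\R^n)$ in dimension one.
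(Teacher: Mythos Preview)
Your argument is correct and in fact more direct than the paper's. The paper does not use the index-$0$ observation to pass to the kernel; instead it works with the \emph{cokernel}. Concretely, the paper uses that $\rd s_0(x_0)$ has closed image (being Fredholm) and characterizes failure of surjectivity by the existence of a nonzero $\eta\in H_0$ orthogonal to the image. This leads to the \emph{adjoint} equation $\partial_t\eta=-\rd X(x_0)^T\eta$, which is then linked to the linearized time-$1$-map via Lemma~\ref{lem:C-equals-linearized-flow-transposed} (relating the fundamental system $Y$ of $A=-\rd X(x_0)^T$ to $(\rd\Phi^1_X(x_0(0)))^{-T}$) together with the Floquet statement Theorem~\ref{thm:floquet-theory}. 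Your route bypasses both of these auxiliary results: once you know the index is $0$, studying the kernel gives the variational equation $\partial_t\hat{x}=\rd X(x_0)\hat{x}$ directly, whose fundamental solution is $\rd\Phi^t_X(x_0(0))$ without any transpose or inversion. The paper's approach has the minor advantage of not needing the index computation up front (closed range suffices), and it identifies the cokernel explicitly, which could be useful in degenerate situations; your approach is shorter and avoids the detour through the adjoint system. Note also that the computation showing $\rd\Phi^t_X(x_0(0))$ solves the variational equation is precisely what the paper carries out inside the proof of Lemma~\ref{lem:C-equals-linearized-flow-transposed}, so you are implicitly reusing that ingredient.
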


This has an immediate corollary:

\begin{corollary}
\label{cor:if-nondegenerate-then-surjective}
If $x_0$ is non-degenerate, then $\rd s(0,x_0):\R\times H_1\to H_0$ is surjective.
\end{corollary}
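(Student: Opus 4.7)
The plan is to observe that Corollary~\ref{cor:if-nondegenerate-then-surjective} is an immediate consequence of Proposition~\ref{prop:nondegenerate-iff-surjective}, exploiting only the trivial fact that a linear map is surjective whenever one of its restrictions is.

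More precisely, I would argue as follows. The linear map $\rd s(0,x_0):\R\times H_1\to H_0$ is defined on a product, so for each $\hat{x}\in H_1$ we have
\begin{equation*}
\rd s(0,x_0)(0,\hat{x}) = \rd s_0(x_0)\hat{x}
\end{equation*}
by formula~\eqref{eq:formula-for-ds(0,x0)} (the $T$-term drops out). In particular,
\begin{equation*}
\operatorname{im}\bigl(\rd s_0(x_0)\bigr)\subseteq \operatorname{im}\bigl(\rd s(0,x_0)\bigr)\subseteq H_0.
\end{equation*}
Since $x_0$ is non-degenerate, Proposition~\ref{prop:nondegenerate-iff-surjective} yields $\operatorname{im}\bigl(\rd s_0(x_0)\bigr)=H_0$, and therefore $\operatorname{im}\bigl(\rd s(0,x_0)\bigr)=H_0$ as well. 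There is no genuine obstacle here; the content of the statement is entirely contained in Proposition~\ref{prop:nondegenerate-iff-surjective}, and the corollary just records that enlarging the domain by the $T$-factor can only enlarge the image.
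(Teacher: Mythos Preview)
Your proposal is correct and matches the paper's treatment: the paper states the corollary without a separate proof, calling it an ``immediate corollary'' of Proposition~\ref{prop:nondegenerate-iff-surjective}, which is exactly the argument you spell out.
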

The eigenvalues of $\rd\Phi^1_X(x(0))$ can be computed in terms of $\rd X(x)$. This gives the following well-known alternative characterization of non-degeneracy.

\begin{lemma}
\label{lem:C-equals-linearized-flow-transposed}
Let $x : S^1 \rightarrow \mathbb{R}^n$ be a 1-periodic orbit of $X$. Set
\begin{align*}
A(t) := -\rd X_t(x(t))^T : \mathbb{R}^n \longrightarrow \mathbb{R}^n
\end{align*}
for every $t\in S^1$ and let $Y: \R \to \R^{n\times n}$ be the fundamental system for $A:S^1\to\R^{n\times n}$, i.e.~the solution of
\begin{align}
\begin{cases}  \frac{\rd}{\rd t} Y(t) = A(t) \cdot Y(t) \\[.5ex] Y(0) = \mathds{1}. \end{cases}
\label{eq:def-fundamental-system}
\end{align}
Then
\begin{align*}
\rd\Phi^1_X(x(0))= \big(Y(1)^T\big)^{-1}
\end{align*}
In particular, $x$ is non-degenerate if and only if $Y(1)$ does not have $1$ as an eigenvalue.
\end{lemma}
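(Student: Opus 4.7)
The plan is a direct verification using the uniqueness of solutions to linear ODEs. I would identify $\rd\Phi^t_X(x(0))$ as the fundamental system of a linear ODE along $x$, and then check by differentiation that the claimed formula $(Y(t)^T)^{-1}$ satisfies the same initial value problem.

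First, set $Z(t) := \rd\Phi^t_X(x(0))$. Differentiating the flow identity $\partial_t\Phi^t_X(y)=X_t(\Phi^t_X(y))$ with respect to the initial condition $y$ at $y=x(0)$, and using $\Phi^t_X(x(0))=x(t)$, I obtain the variational equation
\begin{equation*}
\dot Z(t) = \rd X_t(x(t))\cdot Z(t),\qquad Z(0)=\mathds{1}.
\end{equation*}
Thus, writing $B(t):=\rd X_t(x(t))$, the matrix $Z(t)$ is the fundamental system of $B$. By definition of $A$ we have $B(t) = -A(t)^T$.

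Next, consider $W(t):=Y(t)^TZ(t)$. Differentiating and using $\dot Y(t)=A(t)Y(t)$ together with the identity $A(t)^T=-B(t)$, I compute
\begin{equation*}
\dot W(t) = Y(t)^T A(t)^T Z(t) + Y(t)^T B(t) Z(t) = -Y(t)^T B(t) Z(t) + Y(t)^T B(t) Z(t) = 0.
\end{equation*}
Combined with $W(0)=\mathds{1}$, this yields $Y(t)^TZ(t)\equiv\mathds{1}$ for all $t\in\R$ (in particular $Y(t)$ is invertible for all $t$), so $Z(t)=(Y(t)^T)^{-1}$. Evaluating at $t=1$ gives the claimed formula.

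Finally, for the eigenvalue statement: the eigenvalues of $(Y(1)^T)^{-1}$ are the reciprocals of the eigenvalues of $Y(1)^T$, which in turn coincide with those of $Y(1)$. Hence $1$ is an eigenvalue of $\rd\Phi^1_X(x(0))$ if and only if $1$ is an eigenvalue of $Y(1)$, which gives the non-degeneracy equivalence. There is no real obstacle here; the only thing to be careful about is the sign/transpose bookkeeping, which is precisely arranged so that $A^T+B=0$ makes $W$ constant.
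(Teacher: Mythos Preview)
Your proof is correct and follows essentially the same approach as the paper: both define $Z(t):=\rd\Phi^t_X(x(0))$, derive the variational equation $\dot Z=-A^TZ$, and then show that a product of $Y$ and $Z$ (you use $Y^TZ$, the paper uses $Z^TY$) is constant equal to $\mathds{1}$. Your version is in fact slightly more explicit, since the paper leaves the differentiation of this product to the reader and does not spell out the eigenvalue equivalence.
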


\begin{proof}
We use the flow $\Phi^t_X$ of $X$ to define $Z(t) := \rd\Phi^t_X(x(0))$. Then  $Z(0)= \mathds{1}$ and
\begin{align*}
 \frac{\rd}{\rd t} Z(t) &= \frac{\rd}{\rd t}\Big( \rd\Phi^t_X\big(x(0)\big)\Big) \\[0.5ex]
&= \rd\Big( \frac{\rd}{\rd t} \Phi^t_X\big(x(0)\big) \Big) \\[0.5ex]
&= \rd\Big( X_{t} \big( \Phi^t_X(x(0)) \big) \Big) \\[0.5ex]
&= \rd X_{t} \big( \underbrace{\Phi_X^t(x(0))}_{=x(t)} \big) \cdot \rd\Phi^t_X(x(0)) \\[0.5ex]
&= -A(t)^T \cdot Z(t)
\end{align*}
That is, $Z(t)$  satisfies
\begin{align*}
\begin{cases}   \frac{\rd}{\rd t} Z(t) = -A(t)^T \cdot Z(t) \\ Z(0) = \mathds{1}, \end{cases}
\end{align*}
meaning that $Z$ is a fundamental system of the so-called adjoint system of $A$.
One can easily compute, using the two initial value problems, that
\begin{align*}
Z(t)^T \cdot Y(t) = \mathds{1}\quad\forall t\in \R.
\end{align*}
Therefore, we have
\begin{align*}
Y(t) = \big(Z(t)^T\big)^{-1}\quad\forall t\in \R,
\end{align*}
in particular,
\begin{align*}
Y(1) =\big(Z(1)^T\big)^{-1} = \Big( \rd\Phi^1_X(x(0)) ^T \Big)^{-1}.
\end{align*}
This proves the lemma.
\end{proof}

\begin{remark}\label{rmk:special_case_Hamiltonian_and_fixed_point}
In case that $X=X_H$ is a \textit{Hamiltonian} vector field, i.e.~$dH_t = \omega( X_t, \cdot)$ holds for some time-dependent function $H_t$ and a symplectic form $\omega$, Lemma \ref{lem:C-equals-linearized-flow-transposed} simplifies slightly due to the fact that the matrix $A(t)=\rd X_t(x(0))$ is skew-symmetric. In particular, $Y$ and $Z$ solve the same initial value problem and are thus identical and, in addition, symmetric matrices.

Another simplification occurs in the case of a fixed point of an autonomous vector field. For instance, assume that $X$ is autonomous and $X(0)=0$. The constant orbit $x_0(t):=0$ is then also a delay orbit of any delay, thus the existence of smoothly parametrized delay orbits is immediate. However, Theorem \ref{thm:main-theorem} may still be applied to show local uniqueness. We claim that in this situation non-degeneracy of the 1-periodic orbit $x_0$ is equivalent to $\rd X(0)$ being invertible.
Indeed, using the notation of Lemma \ref{lem:C-equals-linearized-flow-transposed} we see that $A(t)=-\rd X(0)^T$  is constant. Therefore, the fundamental system is given by $Y(t)=\exp (tA)$. By considering a vector $v\in\R^n$ and the ODE that is satisfied by $v(t):= \exp(tA)v$, one easily sees that the matrix $A$ has an eigenvalue $a$ if and only if $\exp(tA)$ has an eigenvalue $e^{ta}$.
In particular, $Y(1)=\exp(A)$ has an eigenvalue $1$ if and only if $A$ has an eigenvalue $0$. The latter is, of course, equivalent to $\rd X(0)$ having a non-trivial kernel.
\end{remark}

In preparation for the proof of Proposition~\ref{prop:nondegenerate-iff-surjective}, we recall the following theorem from Floquet theory.

\begin{theorem}[{\cite{Walter72}}]
\label{thm:floquet-theory}
Let $A: S^1 \rightarrow \R^{n\times n}$ be a smooth 1-periodic matrix valued function and let $Y: \R \to \R^{n\times n}$ be the fundamental system for $A$ defined by \eqref{eq:def-fundamental-system}.
Then $\partial_t\eta(t) = A(t)\eta(t)$ has a non-trivial 1-periodic solution if and only if $1$ is an eigenvalue of $Y(1)$. In this case, the solution is of the form $\eta(t) = Y(t)\cdot \eta(0)$ for all $t$ and $\eta(0)$ being some eigenvector of $Y(1)$ for the eigenvalue $1$.
\end{theorem}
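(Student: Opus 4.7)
The plan is to reduce the statement to two standard ingredients from the theory of linear ODEs: the uniqueness of solutions to the initial value problem, and the representation of all solutions of $\partial_t\eta=A\eta$ via the fundamental system $Y$. Since $A$ is smooth and the circle $S^1$ is compact, $A$ is bounded, so solutions of $\partial_t\eta=A(t)\eta(t)$ exist and are unique on all of $\R$; in particular any solution of the initial value problem with data $\eta(0)=v$ is given by $\eta(t)=Y(t)\cdot v$, because $t\mapsto Y(t)v$ satisfies the same ODE with the same initial condition.

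First, I would establish that 1-periodicity of a solution $\eta$ is equivalent to the single-step identity $\eta(1)=\eta(0)$. The forward direction is trivial by evaluation. For the converse, assuming $\eta(1)=\eta(0)$, I would introduce the time-shifted function $\tilde\eta(t):=\eta(t+1)$ and note that, because $A$ is 1-periodic,
\begin{align*}
\partial_t\tilde\eta(t)=\partial_t\eta(t+1)=A(t+1)\eta(t+1)=A(t)\tilde\eta(t),
\end{align*}
and $\tilde\eta(0)=\eta(1)=\eta(0)$. The uniqueness statement for the IVP then forces $\tilde\eta\equiv\eta$, which is precisely 1-periodicity.

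Second, I would translate the existence of a non-trivial 1-periodic solution into a spectral condition on $Y(1)$. Combining the parametrization $\eta(t)=Y(t)\eta(0)$ with the criterion $\eta(1)=\eta(0)$ from the previous step, the condition becomes $Y(1)\eta(0)=\eta(0)$, and non-triviality is equivalent to $\eta(0)\neq 0$. Hence a non-trivial 1-periodic solution exists if and only if $Y(1)$ admits a non-zero fixed vector, i.e.\ $1$ is an eigenvalue of $Y(1)$; and in that case the solution takes the asserted form $\eta(t)=Y(t)\cdot\eta(0)$ with $\eta(0)$ such an eigenvector. There is no real obstacle here: the argument is essentially bookkeeping once one recognizes that the uniqueness theorem upgrades the pointwise equality $\eta(1)=\eta(0)$ to full 1-periodicity.
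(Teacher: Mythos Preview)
Your argument is correct and is precisely the standard Floquet-theory proof. Note, however, that the paper does not actually supply its own proof of this statement: Theorem~\ref{thm:floquet-theory} is simply quoted from \cite{Walter72} and used as a black box in the proof of Proposition~\ref{prop:nondegenerate-iff-surjective}. So there is nothing to compare against; you have filled in what the paper leaves as a citation, and done so cleanly.
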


Finally, we are ready to prove Proposition~\ref{prop:nondegenerate-iff-surjective}.

\begin{proof}[Proof of Proposition~\ref{prop:nondegenerate-iff-surjective}]

In the proof of Theorem \ref{thm:sc-Fredholm} we have shown that
\begin{align*}
\rd s_0(x_0) = \rd s(0,x_0)(0,\cdot):  H_1 &\longrightarrow H_0 \\
\hat{x} &\longmapsto \partial_t\hat{x} - \rd X(x_0) \cdot \hat{x} 
\end{align*}
is classically Fredholm. In particular, $\rd s_0(x_0)$ has closed image.
Thus, $\text{im}(\rd s_0(x_0))=H_0$ if and only if $(\text{im}(\rd s_0(x_0)))^{\bot} = \{0\}$ in $H_0=L^2$. Therefore, failure of surjectivity of $\rd s_0(x_0)$ is equivalent to the existence of $0\neq\eta \in H_0$ satisfying
\begin{align*}
\langle \rd s_0(x_0) \hat{x}, \eta \rangle_{H_0} = 0 \quad \forall \hat{x} \in  H_1.
\end{align*}
Using the explicit formula above, we see that this is equivalent to
$$\langle \partial_t\hat{x}-\rd X(x_0)\hat{x},\eta\rangle_{H_0} =0 \quad \forall \hat{x} \in H_1.$$

This condition asserts that the weak derivative of $\eta$ exists and equals
\begin{align}
\partial_t\eta =  -\rd X(x_0)^T\eta.\label{eq:equation-for-eta}
\end{align}
In particular, bootstrapping shows that $\eta \in \mathcal{C}^{\infty}$.  If we set
\begin{align}
A(t) := -\rd X_t(x_0(t))^T : \mathbb{R}^n \longrightarrow \mathbb{R}^n,
\label{eq:def-of-A}
\end{align}
then \eqref{eq:equation-for-eta} becomes
\begin{align}
\partial_t\eta (t) = A(t) \eta(t) \label{eq:ODE}.
\end{align}
Now Theorem~\ref{thm:floquet-theory} and Lemma \ref{lem:C-equals-linearized-flow-transposed} imply that such $\eta$ exists if and only if $\rd\Phi_X^1(x_0(0))$ has an eigenvalue 1, that is, if~$x_0$ is a degenerate periodic orbit of $X$.
\end{proof}

\section{The M-polyfold implicit function theorem and the proof of Theorem \ref{thm:main-theorem}}
\label{sec:using-the-IFT}

In the following we will use a special case of the M-polyfold implicit function theorem proved by Hofer, Wysocki and Zehnder, see \cite{HoferWysockiZehnder09}. For completeness and convenience, we provide here the full theorem as stated in textbook \cite{HoferWysockiZehnder17}.

\begin{theorem}[M-polyfold Implicit Function Theorem {\cite[Theorem 3.6.8]{HoferWysockiZehnder17}}]
\label{thm:sc-IFT}
Let $f$ be a sc-Fredholm section of a tame strong M-polyfold bundle $\textup{Y}\rightarrow \textup{X}$. If $f(x)=0$, and if the sc-Fredholm germ $(f,x)$ is in good position, then there exists an open neighborhood $V$ of $x\in \textup{X}$ such that the solution set $\mathcal{S}=\{y\in V \;\vert\; f(y)=0\}$ in $V$ has the following properties.
\begin{itemize}
\item At every point $y\in \mathcal{S}$, the sc-Fredholm germ $(f,y)$ is in good position.
\item $\mathcal{S}$ is a sub-M-polyfold of $\textup{X}$ and the induced M-polyfold structure is equivalent to a smooth manifold structure with boundary with corners.
\end{itemize}
\end{theorem}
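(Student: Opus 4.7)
The plan is to follow the classical implicit function strategy adapted to scale calculus: pass to local bundle charts, use the sc-Fredholm hypothesis to extract a local normal form, solve the reduced equation by a fixed-point argument respecting the scale filtration, and verify sc-smoothness of the resulting parametrization. Exploiting the M-polyfold bundle structure, I would first replace the global section $f$ by its principal part in a local bundle chart around $x$, reducing the problem to a sc-smooth map between (retracts of) sc-Banach spaces. The good-position hypothesis then provides a sc-splitting of the domain into the finite-dimensional kernel $\fK$ of the linearization (automatically contained in the smooth points by sc-Fredholm theory) and a sc-complement $\fV$ on which the linearization restricts to a sc-isomorphism onto the target.

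The heart of the argument is a local normal form. Because $(f,x)$ is a sc-Fredholm germ in good position, after subtracting a suitable $sc^+$-perturbation and applying a sc-smooth change of coordinates, $f$ can be brought locally into a basic-germ form $f(k,v) = v - B(k,v)$ with $(k,v)\in\fK\oplus\fV$, where $B$ is a $sc^0$-germ whose key property is that it is a contraction not on any single level but from level $m$ to level $m+1$ for every $m$. The equation $v = B(k,v)$ can then be solved on level $0$ by the classical Banach fixed-point theorem, producing a continuous solution map $k\mapsto v(k)$. The shift-in-levels contraction immediately bootstraps the solution into every higher level $(\fV)_m$, yielding both the regularizing property for the zero set (all solutions are smooth points) and sc-continuity of $k\mapsto v(k)$. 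Sc-smoothness of this parametrization then follows from the sc-chain rule applied to the implicit fixed point, using that $\fK$ is finite-dimensional so that classical smoothness on $\fK$ matches sc-smoothness.

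Combined with the finite dimension of $\fK$, this yields the desired smooth manifold structure on $\mathcal{S}$. Stability of the linear sc-Fredholm property and persistence of surjectivity under small perturbations ensure that $(f,y)$ remains in good position for nearby $y\in\mathcal{S}$, and the tameness hypothesis is preserved by the normal form so that the boundary-with-corners structure on $\mathcal{S}$ is well-defined. The main obstacle is producing the basic-germ normal form itself: naive adaptations of the classical IFT to sc-calculus fail precisely because the sc-differential includes a shift in levels, so without the contraction-between-levels structure one cannot close a fixed-point iteration, as the counterexamples in \cite{FilippenkoWehrheimZhou19} highlight. Verifying that good position together with the full sc-Fredholm germ condition is exactly the correct hypothesis to produce such a normal form, and that the required $sc^+$-perturbations and sc-smooth coordinate changes degrade neither the Fredholm index nor any regularity, is the delicate technical core of the proof and the reason the sc-Fredholm property is built into the hypothesis rather than simply required of the linearizations.
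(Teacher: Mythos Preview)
The paper does not prove this theorem. Theorem~\ref{thm:sc-IFT} is quoted verbatim from \cite[Theorem~3.6.8]{HoferWysockiZehnder17} as a black-box input, and the paper explicitly says so (``For completeness and convenience, we provide here the full theorem as stated in textbook \cite{HoferWysockiZehnder17}''). There is therefore no proof in the paper against which to compare your proposal.

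That said, your outline is a faithful high-level sketch of the Hofer--Wysocki--Zehnder argument: reduce to a local bundle chart, use the sc-Fredholm germ hypothesis to obtain a basic-germ normal form, run a level-shifting contraction to solve the reduced equation, and bootstrap regularity. You correctly identify the key technical point, namely that the basic-germ condition (not merely linear sc-Fredholmness of the differential) is what makes the fixed-point iteration close, and you cite the relevant obstruction from \cite{FilippenkoWehrheimZhou19}. As a proof, however, this remains a sketch: the construction of the basic-germ normal form, the precise formulation of the level-shifting contraction estimates, and the verification that the resulting parametrization is genuinely sc-smooth (not just $sc^0$) are each substantial pieces of work in \cite{HoferWysockiZehnder17} that you have named rather than carried out. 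For the purposes of this paper that is entirely appropriate, since the theorem is imported, not reproved.
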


In our situation we have $X= \mathbb{R} \times \fH^1$ and $Y=\mathbb{R}\times \fH^1 \times \fH $, i.e.~$Y$ is the trivial bundle and thus is a tame strong M-polyfold bundle, as mentioned before in Remark~\ref{rmk:tame-strong-bundle}. The sc-Fredholm section $f$ is given by $S$, see formula \eqref{eq:def-of-S-as-section} and Theorem \ref{thm:sc-Fredholm}. The solution set $\mathcal{S}$ consists of pairs $(\tau,x_\tau)$ nearby $(0,x_0)$, where $x_\tau$ is a $\tau$-delay orbit of the vector field $X$, see equation \eqref{eq:delay-equation}, as in Theorem \ref{thm:main-theorem}. 

\begin{remark}
\label{rmk:about-sc-IFT}
Before proving Theorem \ref{thm:main-theorem} we point out the following. 
\begin{itemize} 
\item The regularizing property of a sc-Fredholm section implies that the solution set $\mathcal{S}$ is contained in  $X_{\infty} = \bigcap_{m\in\mathbb{N}} X_m$, the set of smooth points of $X$. In our setting $X_{\infty}=\R\times\cC^\infty(S^1,\R^n)$, i.e.~the delay orbits in $\mathcal{S}$ are smooth.
\item If the M-polyfold $X$ in Theorem~\ref{thm:sc-IFT} does not have boundary or corners, then the solution space $\mathcal{S}$ is a smooth, finite-dimensional manifold without boundary or corners.
\item Being in good position consists of two conditions. The first one is surjectivity of $\rd f$ at the point $x\in X$. The second condition is concerned with the case that $X$ has boundary or corners and is thus not relevant in our context.
\item As in the classical implicit function theorem, the tangent space of $\mathcal{S}$ at a point $y\in\mathcal{S}$ is given by
\begin{align*}
T_y\mathcal{S} = \textup{ker } \rd f (y) \subseteq T_yX
\end{align*}
(see \cite[Theorem~3.1.22]{HoferWysockiZehnder17}).
In particular, the local dimension of the solution space $\mathcal{S}$ equals the Fredholm index of the linearized section.
\end{itemize}
\end{remark}

\begin{proof}[Proof of Theorem~\ref{thm:main-theorem}]
\label{proof-of-main-theorem}
By Proposition~\ref{prop:section-sc-smooth-and-sc-differential}, $S$ is sc-smooth and by Theorem~\ref{thm:sc-Fredholm} it is sc-Fredholm. Moreover, according to Proposition~\ref{prop:nondegenerate-iff-surjective} and Corollary~\ref{cor:if-nondegenerate-then-surjective}, non-degeneracy of $x_0$ implies that $\rd s(0,x_0):\R\times H_1\to H_0$ is surjective. Since $S$ is sc-Fredholm, its vertical differential $\rd s(0,x_0):\R\times\fH^1\to\fH$ at $(0,x_0)$ is a linear sc-Fredholm operator. In particular, $\rd s(0,x_0)$ is surjective on all levels, cf.~Definition \ref{def:linear-sc-Fredholm}, and thus the germ of $S$ at $(0,x_0)$ is in good position. Therefore, we can apply the M-polyfold implicit function theorem, Theorem \ref{thm:sc-IFT}, and conclude that the solution set
$$ \mathcal{S}=\{ (\tau,x) \in\R\times\fH^1 \;\vert\; s(\tau,x)=0 \}$$
is, near $(0,x_0)$, a finite-dimensional smooth manifold which we denote by $Z$. The dimension of $Z$ equals the Fredholm index, which is $\dim Z=1$ by Proposition \ref{prop:fredholm-index}.

We have seen in Proposition~\ref{prop:nondegenerate-iff-surjective} that the non-degeneracy of $x_0$ implies that $\rd s_0(x_0)=\rd s(0,x_0)\big|_{\{0\}\times H_1}:H_1\to H_0$ is surjective. Moreover, $\rd s_0(x_0)$ is a Fredholm operator of index $0$, thus an isomorphism. In particular, $\textup{ker }\rd s(0,x_0)$ is not contained in $\{0\} \times H^1$. Therefore, near $(0,x_0)$ the manifold $Z\subset\R\times H^1$ is a graph over $\R$, i.e., near $(0,x_0)$, we can smoothly parametrize $Z$ as $\tau \mapsto x_{\tau}$.
\end{proof}

\begin{remark}
If it was possible to apply the M-polyfold implicit function theorem near every pair $(\tau,x)\in \cS$ in the solution set $\cS$, then all of $\cS$ would carry the structure of a 1-dimensional manifold. However, for $\tau\neq 0$ the linearization $\rd s(\tau, x_\tau)$ is significantly more complicated than $\rd s (0,x_0)$. It is unclear to us how to formulate a criterion for surjectivity of this map in terms of the vector field.
\end{remark}

\section{Generalization to manifolds}
\label{sec:generalization-to-manifolds}

As noticed in the introduction, if we pass from $\mathbb{R}^n$ to a manifold $M$, equation~\eqref{eq:delay-equation} does not make sense anymore. Still, of course there are interesting equations on manifolds that involve a delay, for instance Lotka--Volterra equations with delay. For this and further examples see \cite[Section 3]{AlbersFrauenfelderSchlenk20}.

In the following, we want to focus on 1-periodic solutions $x: S^1 \rightarrow M$ of equations of the form
\begin{align}
\label{eq:delay-equation_manifold}
\partial_t x(t) = f_t(x(t\shortminus \tau))\cdot X_{t}(x(t)) \qquad \text{for all } t\in S^1,
\end{align}
where $X$ is some vector field and $f$ a function on $M$, both depending smoothly on time. This set-up can be generalized further.

We set $\cB := W^{1,2}(S^1,M)$ and equip $\cB$ with the scale structure
$$\cB_m := W^{1+m,2}(S^1,M).$$
Choosing a Riemannian metric $\langle\cdot,\cdot\rangle_M$ on $M$ turns $\cB$ into a sc-Hilbert-manifold. For each $(\tau,x)\in\R\times\cB$ denote by $\cE_{(\tau,x)} := L^2(S^1,x^*TM)$ the Hilbert space of $L^2$-vector fields along $x$
with scale structure $\cE_{(\tau,x),k} = W^{k,2}(S^1,x^*TM)$. These form a bundle
$p: \cE \rightarrow \R \times \cB$ with fiber $\cE_{(\tau,x)}$ over $(\tau,x)$.
The double filtration
\begin{align*}
\cE_{m,k} := \left\lbrace \big((\tau,x),\eta\big) \,\big\vert\, (\tau,x)\in \R \times\cB_m, \eta\in \cE_{(\tau,x),k} \right\rbrace \qquad \text{for } 0\leq k \leq m+1
\end{align*}
gives $p: \cE \rightarrow \R \times \cB$ the structure of a tame strong M-polyfold bundle. Still, everything is modeled on sc-Hilbert spaces. We define a section by
\begin{align}
\sigma: \R\times \cB &\longrightarrow \cE \nonumber \\
(\tau,x) &\longmapsto \partial_t x - f(\varphi(\tau,x)) \cdot X(x) .
\end{align}
Then the zero set
\begin{align*}
\{ (\tau,x) \in \mathbb{R} \times \cB \;\vert\; \sigma(\tau,x) =0 \}
\end{align*}
is the set of 1-periodic solutions of equation~\eqref{eq:delay-equation_manifold}. The statements from Section~\ref{sec:sc-smoothness}--\ref{sec:using-the-IFT} carry over to the current set-up with minor modifications, see below. For convenience we refer to the corresponding analogous statements in the previous sections.

\begin{proposition}[cf.~Proposition~\ref{prop:section-sc-smooth-and-sc-differential}]
\label{prop:section-sc-smooth-and-sc-differential_manifold}
The section $\sigma$ is sc-smooth.
Its vertical sc-differential $\rd^v \sigma(\tau,x)$ at the point $(\tau,x)\in \mathbb{R}\times \cB_1$ is given by
\begin{align*}
\rd^v \sigma(\tau,x) : \mathbb{R} \times T_x\cB &\longrightarrow \cE_{(\tau,x)} \\
(T,\hat{x}) &\longmapsto  \nabla_{\partial_t x}\hat{x}
- f(\varphi(\tau,x))\cdot \nabla_{\hat{x}} X (x)\\
&\phantom{\longmapsto =} 
	-  \rd f(\varphi(\tau,x)) \big( \varphi(\tau, \hat{x}) - T\cdot \varphi(\tau,\partial_t x) \big) \cdot X(x),
\end{align*}
where $\nabla$ is the Levi-Civita connection on $M$ with respect to $\langle\cdot,\cdot\rangle_M$.
\end{proposition}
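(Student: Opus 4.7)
The plan is to reduce Proposition~\ref{prop:section-sc-smooth-and-sc-differential_manifold} to the Euclidean case of Proposition~\ref{prop:section-sc-smooth-and-sc-differential} by working in local charts and then invoking the chain rule of sc-calculus. Fix a smooth reference point $(\tau_0,x_0)\in\R\times\cB$ and, after shrinking the injectivity radius of $\langle\cdot,\cdot\rangle_M$ near $x_0(S^1)$, use the pointwise exponential map
$$
\exp_{x_0}\colon U\subseteq W^{1,2}(S^1,x_0^{*}TM)\longrightarrow \cB,\qquad \hat x\longmapsto \bigl(t\mapsto\exp_{x_0(t)}\hat x(t)\bigr),
$$
as a sc-Hilbert chart of $\cB$ around $x_0$. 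Parallel transport along the radial geodesics level-wise trivializes the bundle $\cE$ over this chart, and parallel transport along $x_0$ identifies $x_0^{*}TM\cong S^1\times\R^n$, bringing the whole set-up into the framework of Sections~\ref{sec:classical-differentiability}--\ref{sec:sc-fredholm-property}.

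In these local coordinates the principal part of $\sigma$ becomes a composition of sc-smooth building blocks: the time derivative $\partial_t\colon\fH^1\to\fH$, which is linear and bounded at every level and hence sc-smooth by \cite[Proposition~1.2.4]{HoferWysockiZehnder17}; the shift map $\varphi\colon\R\times\fH\to\fH$, sc-smooth by \cite[Theorem~6.1]{FrauenfelderWeber18}; the Nemytskii-type operators associated with the smooth families $f_t$ and $X_t$, which are sc-smooth exactly as in the proof of Proposition~\ref{prop:section-sc-smooth-and-sc-differential}; and pointwise scalar multiplication, continuous bilinear at every level by the one-dimensional Sobolev algebra property $W^{1+m,2}(S^1)\cdot W^{1+m,2}(S^1)\subseteq W^{1+m,2}(S^1)$. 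The sc-calculus chain rule then gives sc-smoothness of $\sigma$.

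To obtain the formula for the vertical differential I would adopt the intrinsic variational viewpoint: given $(T,\hat x)\in\R\times T_x\cB$, pick a variation $s\mapsto(\tau+sT,x_s)$ with $x_0=x$ and $\partial_s x_s|_{s=0}=\hat x$, for instance $x_s=\exp_x(s\hat x)$, and set
$$
\rd^{v}\sigma(\tau,x)(T,\hat x)\;=\;\frac{\nabla}{\rd s}\bigg|_{s=0}\sigma(\tau+sT,x_s),
$$
where $\nabla/\rd s$ denotes covariant differentiation along the variation in $\cE$ induced by the Levi-Civita connection via parallel transport along radial geodesics. Torsion-freeness gives $\tfrac{\nabla}{\rd s}\partial_t x_s|_{s=0}=\nabla_{\partial_t x}\hat x$, and $\tfrac{\nabla}{\rd s}X(x_s)|_{s=0}=\nabla_{\hat x}X(x)$. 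The scalar coefficient $f(\varphi(\tau+sT,x_s))$ is differentiated by combining the chain rule with Lemma~\ref{lem:derivative-of-shift-map}, yielding $\rd f(\varphi(\tau,x))\bigl(\varphi(\tau,\hat x)-T\cdot\varphi(\tau,\partial_t x)\bigr)$. The Leibniz rule applied to the product $f(\varphi(\tau,x))\cdot X(x)$ then assembles the three ingredients into the stated formula.

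The main obstacle I anticipate is the intrinsic interpretation of the shift on $\cB$: pointwise, $\varphi(\tau,x)(t)=x(t-\tau)$ lives over a shifted basepoint $x(t-\tau)$, so to view $\varphi$ as a sc-smooth self-map of a fixed chart around $x_0$ one has to identify the pullback bundles $x_0^{*}TM$ and $(x_0(\cdot-\tau))^{*}TM$ in a $\tau$-smooth way. Trivializing $x_0^{*}TM$ by parallel transport along the smooth loop $x_0$ reduces the action of $\varphi$ to the Euclidean case treated in Sections~\ref{sec:classical-differentiability}--\ref{sec:sc-smoothness}, modulo sc-smooth $\tau$-dependent correction terms arising from the mismatch between the trivialization and the shifted basepoint. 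These correction terms are smooth functions of $\tau$ composed with already-established sc-smooth operations in $\fH$, and hence pose no genuinely new analytic difficulty beyond careful bookkeeping in the chart.
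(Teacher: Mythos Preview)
Your proposal is correct and follows essentially the same approach as the paper: sc-smoothness via the chain rule applied to the building blocks $\partial_t$, the shift map $\varphi$, the Nemytskii operators for $f$ and $X$, and pointwise scalar multiplication, with the differential formula then obtained from the product and chain rules. The paper's proof is considerably terser---it simply invokes the chain and product rules by analogy with Proposition~\ref{prop:section-sc-smooth-and-sc-differential} and does not spell out the exponential-chart construction or the shift-map subtlety you discuss---and your variational derivation of $\rd^v\sigma$ via covariant differentiation along $s\mapsto(\tau+sT,x_s)$ is equivalent to the paper's direct Leibniz-rule computation on $f(\varphi(\tau,x))\cdot X(x)$ followed by the chain rule for $f\circ\varphi$ using Lemma~\ref{lem:derivative-of-shift-map}.
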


\begin{proof}
Sc-smoothness follows by chain rule from sc-smoothness of the shift map and classical smoothness of $f$ and $X$, together with sc-smoothness of $\partial_t$, exactly as in the proof of Proposition~\ref{prop:section-sc-smooth-and-sc-differential}. To get the explicit formula for the vertical differential, we use the product rule to compute
\begin{align*}
\rd^v\sigma(\tau,x)(T,\hat{x}) =  \nabla_{\partial_t x}\hat{x} - \Big( \rd(f\circ \varphi)(\tau,x)(T,\hat{x}) \cdot X(x) + f(\varphi(\tau,x))\cdot \nabla_{\hat{x}} X (x) \Big)
\end{align*}
and, with the chain rule,
\begin{align*}
\rd(f\circ \varphi)(\tau,x)(T,\hat{x}) &= \rd f(\varphi(\tau,x)) \big( \rd \varphi(\tau,x)(T,\hat{x}) \big) \\
&= \rd f(\varphi(\tau,x)) \big( \varphi(\tau, \hat{x}) - T\cdot \varphi(\tau,\partial_t x) \big).\qedhere
\end{align*}
\end{proof}

\begin{theorem}[cf.~Theorem~\ref{thm:sc-Fredholm} and Proposition~\ref{prop:fredholm-index}]
\label{thm:sc-Fredholm_manifold}
The section $\sigma$ is a sc-Fredholm section of Fredholm index $1$.
\end{theorem}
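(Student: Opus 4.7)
The plan is to reduce the statement to the flat results Theorem~\ref{thm:sc-Fredholm} and Proposition~\ref{prop:fredholm-index} by trivializing a neighborhood of a fixed smooth loop $x_* \in \bigcap_m \cB_m = \cC^\infty(S^1, M)$. The Riemannian exponential map $\xi \mapsto \exp_{x_*}(\xi)$ together with a smooth orthonormal frame along $x_*$ identifies a neighborhood of $x_*$ in $\cB$ with a neighborhood of the zero section in $W^{1,2}(S^1, \R^n)$ sc-diffeomorphically; parallel transport along the geodesics emanating from $x_*$ trivializes the bundle $\cE$ fiberwise. In this chart the section takes the schematic form
\begin{align*}
\tilde\sigma(\tau, \xi) = \partial_t\xi + L(\xi) - \tilde f(\tau, \xi)\cdot \tilde X(\xi),
\end{align*}
where $L$ collects the Christoffel-type corrections, $\tilde X$ depends on $\xi$ through pointwise composition with a smooth map on $M$, and all the $\tau$-dependence is concentrated in $\tilde f(\tau,\xi) = f(\varphi(\tau, \exp_{x_*}\xi))$. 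Since the sc-Fredholm property at a smooth point is defined by pulling back to the origin via such a sc-smooth change of coordinates (as at the end of the proof of Theorem~\ref{thm:sc-Fredholm}), it will suffice to verify Wehrheim's conditions for $\tilde\sigma$ at $(\tau,\xi)=(0,0)$.

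Having made this reduction I would verify conditions (i), (ii) and (iii) of Definition~\ref{def:wehrheims-definition} along the three-step structure used in the proof of Theorem~\ref{thm:sc-Fredholm}. Regularization in (i) follows from the identity $\partial_t\xi = \tilde f(\tau,\xi)\cdot\tilde X(\xi) - L(\xi)$: the right-hand side preserves Sobolev regularity of $\xi$ because pointwise composition with smooth functions does, and $\varphi$ is an isometry on every level. Part (ii) -- classical differentiability of $\tilde\sigma(\tau,\cdot)$ and uniform continuity of its linearization -- is inherited from smoothness of $f$, $X$ and the frame data, exactly as in (ii)(a) of that proof, while the critical continuity-after-evaluation property (ii)(b) follows from Lemma~\ref{lem:shift-map-continuous-after-evaluation} because all $\tau$-dependence factors through $\varphi$.

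For (iii) and the index I would read off the explicit formula for $\rd^v\sigma$ from Proposition~\ref{prop:section-sc-smooth-and-sc-differential_manifold}. In the chart the leading term $\nabla_{\partial_t x}\hat x$ becomes $\partial_t\hat\xi$ plus a zeroth-order $t$-dependent matrix action, hence a compact perturbation of $\partial_t : H_1 \to H_0$ and therefore Fredholm of index $0$. The two $T$-free correction terms $f(\varphi(\tau,x))\cdot\nabla_{\hat x}X(x)$ and $\rd f(\varphi(\tau,x))(\varphi(\tau,\hat x))\cdot X(x)$ both factor through the compact embedding $H_1\hookrightarrow H_0$, so they preserve Fredholmness and index, and the weak regularization of the kernel required at small $\tau$ follows from the same bootstrap as in (i). Finally the $\R$-direction contributes via the rank-at-most-one term $T\cdot\rd f(\varphi(\tau,x))(\varphi(\tau,\partial_t x))\cdot X(x)$, and Lemma~\ref{lem:fredholm-index-change} raises the index from $0$ to $1$. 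The one technical point I expect to require care is verifying that the exponential-map chart really produces a sc-smooth bundle trivialization and intertwines $\varphi$ with its flat-space counterpart; however since $\exp$ acts pointwise in $t$ it commutes with $\varphi$ and all $\tau$-dependence remains encapsulated in the sc-smooth map $\tilde f$, so this bookkeeping is routine and the argument of Theorem~\ref{thm:sc-Fredholm} transports verbatim.
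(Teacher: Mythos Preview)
Your proposal is correct and matches the paper's approach exactly: the paper's own proof is the single remark that the sc-Fredholm property and the index computation are local, so the arguments of Theorem~\ref{thm:sc-Fredholm} and Proposition~\ref{prop:fredholm-index} carry over with minor adaptations---precisely the exponential-chart localization you spell out. The only caveat is that your global orthonormal frame along $x_*$ tacitly presumes $x_*^*TM\to S^1$ is trivial, an assumption the paper imposes only later for the transversality discussion; in the non-trivial case one either works directly in the sc-Hilbert space $W^{1,2}(S^1,x_*^*TM)$ without choosing a frame or passes to the double cover as in Remark~\ref{rmk:if-x0*TM-is-not-globally-trivial}, but this does not affect the substance of your argument.
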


Since the sc-Fredholm property and the index computation is local, the proofs of Theorem~\ref{thm:sc-Fredholm} and Proposition~\ref{prop:fredholm-index} work with minor adaptations. We skip the details here.

Assume now that we have a solution $x_0:S^1 \rightarrow M$ of equation~\eqref{eq:delay-equation_manifold} for $\tau=0$. For simplicity we assume in the following that $x_0^*TM \rightarrow S^1$ is trivial. This is, for instance, the case, if $M$ is orientable. The general case can be treated after suitable modifications.

 \begin{definition}
\label{def:non-degenerate_mfd}
Denote the flow of $fX$ by $\Phi_{fX}^t$. Let $x : S^1 \rightarrow M$ be a 1-periodic orbit of $fX$ with the property that $x_0^*TM \rightarrow S^1$ is trivial. We call $x$ \textit{non-degenerate} if the linearized time-1-map $\rd\Phi^1_X(x(0))$ does not have $1$ as an eigenvalue.
\end{definition}

We want to prove a statement about the existence of solutions of equation~\eqref{eq:delay-equation_manifold} with small delay $\tau\neq 0$, similar to Theorem~\ref{thm:main-theorem}. In order to apply the M-polyfold implicit function theorem it only remains to infer surjectivity of $\rd^v\sigma(0,x_0)$ from non-degeneracy of $x_0$.

\begin{proposition}[cf.~Proposition~\ref{prop:nondegenerate-iff-surjective}]
\label{prop:nondegenerate-iff-surjective_manifold}
Assume that $x_0^*TM \rightarrow S^1$ is the trivial bundle. Then the linear map $\rd^v\sigma_0(x_0) = \rd^v\sigma(0,x_0)(0,\cdot): T_{x_0}\cB \longrightarrow \cE_{(0,x_0)}$ is surjective if and only if $x_0$ is non-degenerate as a 1-periodic orbit of the vector field $fX$.
\end{proposition}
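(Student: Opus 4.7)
The plan is to reduce this statement to its Euclidean counterpart Proposition~\ref{prop:nondegenerate-iff-surjective}, applied to the composite vector field $fX$ instead of $X$. The key observation is that at the point $(0,x_0)$ and restricted to the horizontal direction $T=0$, the product rule for the Levi-Civita connection gives
\begin{align*}
\rd^v\sigma(0,x_0)(0,\hat{x})
&= \nabla_{\partial_t x_0}\hat{x} - f(x_0)\cdot \nabla_{\hat{x}}X(x_0) - \rd f(x_0)(\hat{x})\cdot X(x_0) \\
&= \nabla_{\partial_t x_0}\hat{x} - \nabla_{\hat{x}}(fX)(x_0),
\end{align*}
which is exactly the linearization of the ordinary vector field equation $\partial_t x = (fX)(x)$ along $x_0$. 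Notice that the delay has disappeared at $\tau=0$, precisely because the shift map acts as the identity there.

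Next, I would trivialize $x_0^*TM \to S^1$ using the assumed triviality of this bundle, together with a parallel (or any smooth) frame. This identifies $T_{x_0}\cB = W^{1,2}(S^1, x_0^*TM)$ with $H_1 = W^{1,2}(S^1,\R^n)$ and $\cE_{(0,x_0)}$ with $H_0 = L^2(S^1,\R^n)$. Under this identification, $\nabla_{\partial_t x_0}$ becomes $\partial_t + \Gamma(t)$ for a smooth matrix-valued $\Gamma$ encoding the Christoffel symbols evaluated along $x_0$, and $\nabla_{\hat{x}}(fX)(x_0(t))$ becomes $\widetilde{B}(t)\hat{x}(t)$ for a smooth matrix-valued $\widetilde{B}$. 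Combining both, the operator $\rd^v\sigma_0(x_0)$ takes the form $\hat{x}\mapsto \partial_t\hat{x} - B(t)\hat{x}$ for a smooth 1-periodic $B:S^1\to \R^{n\times n}$; this has precisely the shape of the operator analyzed in Proposition~\ref{prop:nondegenerate-iff-surjective}.

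From here, the proof runs in parallel with the Euclidean case. The operator is sc-Fredholm by Theorem~\ref{thm:sc-Fredholm_manifold}, hence has closed range; surjectivity is therefore equivalent to triviality of its $L^2$-cokernel. Integration by parts turns the adjoint equation into an ODE $\partial_t\eta = -B(t)^T\eta$ on $\R^n$, to which Floquet theory (Theorem~\ref{thm:floquet-theory}) together with the flow-theoretic identification from Lemma~\ref{lem:C-equals-linearized-flow-transposed} applies: the existence of a nontrivial 1-periodic solution is equivalent to $\rd\Phi^1_{fX}(x_0(0))$ having $1$ as an eigenvalue, which, by Definition~\ref{def:non-degenerate_mfd}, is exactly the degeneracy of $x_0$ as a 1-periodic orbit of $fX$.

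The main obstacle I anticipate is bookkeeping rather than conceptual: one must verify that the trivialization is intrinsic enough that the spectrum of the linearized time-one map is unchanged (so that Definition~\ref{def:non-degenerate_mfd} transports correctly), and that the connection terms $\Gamma(t)$ combine cleanly with $\widetilde B(t)$ to yield the linearized flow of $fX$ in coordinates. Both are standard consequences of naturality of the flow and its linearization under the choice of frame, and the argument does not require $\Gamma$ to vanish; any smooth trivialization suffices because the eigenvalues of the monodromy are conjugation-invariant.
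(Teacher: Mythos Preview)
Your proposal is correct and reaches the same conclusion, but the route differs from the paper's in an instructive way. The paper does not trivialize only the pullback bundle $x_0^*TM\to S^1$; instead it uses the triviality hypothesis to obtain a tubular neighborhood $U\subset M$ of the image $x_0(S^1)$ that is diffeomorphic to an open set $V\subset\R^n$, and then conjugates the \emph{full nonlinear section} $\sigma$ by this diffeomorphism. This reduces the statement verbatim to Proposition~\ref{prop:nondegenerate-iff-surjective} applied to the pushed-forward vector field $\psi_*(fX)$ on $V$, so no Floquet computation needs to be redone and Lemma~\ref{lem:C-equals-linearized-flow-transposed} applies literally.

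Your approach, by contrast, works intrinsically with the linearized operator: you trivialize $x_0^*TM$ by a frame, obtain an operator $\hat x\mapsto \partial_t\hat x - B(t)\hat x$ on $H_1\to H_0$, and then rerun the cokernel/Floquet argument. This is perfectly valid and arguably more direct for the purely linear question, but the ``bookkeeping'' you flag is exactly where the content lies: one must check that the monodromy of $\partial_t - B(t)$ in the chosen frame is conjugate to $\rd\Phi^1_{fX}(x_0(0))$, so that Lemma~\ref{lem:C-equals-linearized-flow-transposed} transfers. The paper's neighborhood diffeomorphism buys this for free, since pushing forward the vector field and the flow commute with the diffeomorphism on the nose.
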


As before the following is an immediate corollary.

\begin{corollary}[cf.~Corollary \ref{cor:if-nondegenerate-then-surjective}]
\label{cor:if-nondegenerate-then-surjective_manifold}
If $x_0$ is a non-degenerate periodic orbit of the vector field $fX$ and the pullback bundle $x_0^*TM \rightarrow S^1$ is trivial, then $\rd^v\sigma(0,x_0): \R\times T_{x_0}\cB \longrightarrow \cE_{(0,x_0)}$ is surjective.
\end{corollary}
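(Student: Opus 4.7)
The plan is to obtain the corollary as an immediate consequence of Proposition~\ref{prop:nondegenerate-iff-surjective_manifold}, following exactly the pattern by which Corollary~\ref{cor:if-nondegenerate-then-surjective} is deduced from Proposition~\ref{prop:nondegenerate-iff-surjective} in the flat $\R^n$ setting earlier in the paper. The key observation is purely formal: the linear map $\rd^v\sigma(0,x_0):\R\times T_{x_0}\cB\to\cE_{(0,x_0)}$ restricted to the subspace $\{0\}\times T_{x_0}\cB$ coincides, by the very definition spelled out in Proposition~\ref{prop:nondegenerate-iff-surjective_manifold}, with the map $\rd^v\sigma_0(x_0):T_{x_0}\cB\to\cE_{(0,x_0)}$. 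Since surjectivity of a restriction trivially implies surjectivity of the ambient map, the corollary reduces to the surjectivity statement already furnished by the proposition.

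Concretely, I would structure the argument in two short lines. First, given any $\eta\in\cE_{(0,x_0)}$, invoke Proposition~\ref{prop:nondegenerate-iff-surjective_manifold}, whose hypotheses are precisely the ones assumed in the corollary (non-degeneracy of $x_0$ together with triviality of $x_0^*TM\to S^1$), to produce $\hat{x}\in T_{x_0}\cB$ with $\rd^v\sigma_0(x_0)\hat{x}=\eta$. Second, plug $T=0$ into the formula of Proposition~\ref{prop:section-sc-smooth-and-sc-differential_manifold} to read off
\[
\rd^v\sigma(0,x_0)(0,\hat{x})\;=\;\rd^v\sigma_0(x_0)\hat{x}\;=\;\eta,
\]
which exhibits $(0,\hat{x})\in\R\times T_{x_0}\cB$ as a preimage of $\eta$ and thereby proves the claim.

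There is essentially no obstacle: the only ingredient beyond the cited proposition is the purely algebraic fact that restricting the domain of a surjective linear map to a subspace on which the map still hits every target vector leaves it surjective. The one consistency check worth noting is that the formula for $\rd^v\sigma(0,x_0)$ in Proposition~\ref{prop:section-sc-smooth-and-sc-differential_manifold} indeed simplifies at $\tau=0$, $T=0$ to the expression
\[
\rd^v\sigma_0(x_0)\hat{x}\;=\;\nabla_{\partial_t x_0}\hat{x}-f(x_0)\cdot\nabla_{\hat{x}}X(x_0)-\rd f(x_0)(\hat{x})\cdot X(x_0),
\]
which is exactly the linearization whose surjectivity is analyzed in Proposition~\ref{prop:nondegenerate-iff-surjective_manifold}. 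Once this identification is recorded, the proof of the corollary is a single sentence.
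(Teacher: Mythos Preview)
Your proposal is correct and matches the paper's approach exactly: the paper also treats this corollary as an immediate consequence of Proposition~\ref{prop:nondegenerate-iff-surjective_manifold} (indeed, it does not even write out a proof, just as it gave no proof for the analogous Corollary~\ref{cor:if-nondegenerate-then-surjective}). The only content is the trivial observation you spell out, namely that $\rd^v\sigma(0,x_0)$ restricted to $\{0\}\times T_{x_0}\cB$ agrees with $\rd^v\sigma_0(x_0)$.
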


\begin{proof}[Proof of Proposition \ref{prop:nondegenerate-iff-surjective_manifold}]
Since the bundle $x_0^*TM \rightarrow S^1$ is trivial there is a neighborhood $U\subset M$ of $x_0(S^1)$ which is diffeomorphic to an open set $V\subset\R^n$ by a diffeomorphism $\psi:U\to V$. Then $\psi_*(fX)$ has $\psi(x_0)$ as 1-periodic orbit and $x_0$ is non-degenerate if and only if $\psi(x_0)$ is non-degenerate. Moreover, the section $\sigma|_{\R\times \cB_U}:\R\times \cB_U\to \cE|_{\R\times \cB_U}$, with $B_U:=W^{1,2}(S^1,U)$, is conjugated via $\psi$ and $d\psi$ to a section $\tilde\sigma$ of the form \eqref{eq:def-of-S-as-section}. Finally, $\rd^v\sigma(0,x_0)$ is surjective if and only if $\rd^v\tilde\sigma(0,\psi(x_0))$ is surjective. This means that we reduced the situation to the case of $\R^n$ and the assertion follows from Proposition \ref{prop:nondegenerate-iff-surjective}.
\end{proof}

Combining all these results and using the M-polyfold implicit function theorem, we get the following generalization of our main theorem. 

\begin{theorem}[cf.~Theorem~\ref{thm:main-theorem}]
\label{thm:main-theorem_manifold}
We consider a vector field $X$ and a function $f$, both smooth and 1-periodic, on a manifold $M$.
Let $x_0$ be a non-degenerate 1-periodic orbit of the vector field $fX$. (In particular, we assume that $x_0^*TM \rightarrow S^1$ is trivial.) Then there is $\tau_0 > 0$ such that for every delay $\tau$ with $\vert \tau \vert \leq \tau_0$ there exists a (locally unique) smooth 1-periodic solution $x_{\tau}$ of the delay equation~\eqref{eq:delay-equation_manifold}. Moreover, the parametrization $\tau\mapsto x_{\tau}$ is smooth.
\end{theorem}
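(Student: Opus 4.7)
The plan is to imitate, step by step, the proof of Theorem~\ref{thm:main-theorem} in Section~\ref{sec:using-the-IFT}, now invoking the manifold analogues established above. The three ingredients needed to apply the M-polyfold implicit function theorem (Theorem~\ref{thm:sc-IFT}) at the point $(0,x_0)\in\R\times\cB$ are already in place: sc-smoothness of the section $\sigma$ (Proposition~\ref{prop:section-sc-smooth-and-sc-differential_manifold}), the sc-Fredholm property with index $1$ (Theorem~\ref{thm:sc-Fredholm_manifold}), and surjectivity of the vertical differential at $(0,x_0)$ on the $0$-level, which follows from the non-degeneracy hypothesis via Corollary~\ref{cor:if-nondegenerate-then-surjective_manifold}. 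Because $\sigma$ is sc-Fredholm, surjectivity of $\rd^v\sigma(0,x_0)$ on the $0$-level automatically upgrades to surjectivity on every level (cf.\ Definition~\ref{def:linear-sc-Fredholm}), so the germ $(\sigma,(0,x_0))$ is in good position.

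Theorem~\ref{thm:sc-IFT} then yields an open neighborhood $V\subset\R\times\cB$ of $(0,x_0)$ such that the zero set
\begin{align*}
Z = \{(\tau,x)\in V \mid \sigma(\tau,x)=0\}
\end{align*}
is a smooth submanifold of dimension equal to the Fredholm index, that is $\dim Z = 1$. Moreover, the regularizing property built into the sc-Fredholm condition forces $Z\subset \R\times \cC^\infty(S^1,M)$, so every resulting 1-periodic delay orbit is automatically smooth, as required by the statement.

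It remains to promote $Z$ to a smooth graph over the $\tau$-axis. I would argue as at the end of Section~\ref{sec:using-the-IFT}: the restricted operator $\rd^v\sigma(0,x_0)\vert_{\{0\}\times T_{x_0}\cB}:T_{x_0}\cB\to \cE_{(0,x_0)}$ is Fredholm of index $0$ (by the same bookkeeping as in Proposition~\ref{prop:fredholm-index} and Lemma~\ref{lem:fredholm-index-change}, since dropping the $\R$-factor decreases the index by one) and surjective by Proposition~\ref{prop:nondegenerate-iff-surjective_manifold}, hence an isomorphism. Consequently $\ker \rd^v\sigma(0,x_0)$ is not contained in $\{0\}\times T_{x_0}\cB$, so the tangent line to $Z$ at $(0,x_0)$ projects isomorphically onto $\R$. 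Shrinking $V$ if necessary, $Z$ is therefore the graph of a smooth map $\tau\mapsto x_\tau$ defined for $|\tau|\leq \tau_0$ for some $\tau_0>0$; local uniqueness is the standard uniqueness clause of the implicit function theorem. There is no genuinely new obstacle at this stage — the only manifold-specific input is the trivialization of $x_0^*TM$, which is used inside Proposition~\ref{prop:nondegenerate-iff-surjective_manifold} to reduce the surjectivity question to the flat model, after which everything reduces to a clean reassembly of the scale-calculus facts proved earlier.
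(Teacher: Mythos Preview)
Your proposal is correct and follows exactly the approach the paper intends: the paper does not write out a separate proof for Theorem~\ref{thm:main-theorem_manifold}, but simply states that ``combining all these results and using the M-polyfold implicit function theorem'' yields the conclusion, i.e.\ one repeats the proof of Theorem~\ref{thm:main-theorem} verbatim with Proposition~\ref{prop:section-sc-smooth-and-sc-differential_manifold}, Theorem~\ref{thm:sc-Fredholm_manifold}, Proposition~\ref{prop:nondegenerate-iff-surjective_manifold} and Corollary~\ref{cor:if-nondegenerate-then-surjective_manifold} in place of their $\R^n$ counterparts. Your write-up spells this out faithfully, including the index bookkeeping and the graph argument at the end.
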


\begin{remark}
\label{rmk:if-x0*TM-is-not-globally-trivial}
In the case that $x_0^*TM$ is not the trivial bundle, a straightforward idea is to consider the double cover $y_0$ of $x_0$ and work on the space of 2-periodic functions instead. Then, assuming that $y_0$ is non-degenerate as a 2-periodic orbit of $fX$, the M-polyfold implicit function theorem will provide a smooth family of 2-periodic delay orbits $y_{\tau}$ near $y_0$. In this situation non-degeneracy of $y_0$ is equivalent to the condition that $\rd \Phi_{fX}^1(x_0(0))$ has neither $1$ nor $-1$ as an eigenvalue. 
\end{remark}

\appendix

\section{Proofs for Section~\ref{sec:classical-differentiability}}

In this appendix we give proofs for the facts that were mentioned in Section~\ref{sec:classical-differentiability}.
The following basic observation is repeatedly used throughout this appendix.
\begin{remark}
From $\Vert \varphi(\tau,x)\Vert_{H_m} = \Vert x \Vert_{H_m}$ and linearity of $\varphi$ in the second variable we conclude that $\varphi(\tau,\cdot)$ is an $H_m$-isometry, i.e.~$\Vert \varphi(\tau,x)-\varphi(\tau,y) \Vert_{H_m} = \Vert x -y \Vert_{H_m}$. Therefore, for every $x\in H_m$, every sequence $(x_i)_i \subset H_m$ and every $\tau\in\R$ it is
\begin{align*} 
\varphi(\tau,x_i) \rightarrow \varphi(\tau, x) \qquad \Longleftrightarrow \qquad x_i \rightarrow x.
\end{align*}
\end{remark}

\begin{proof}[Proof of Lemma~\ref{lem:shift-map-continuous-after-evaluation}]
Let us recall that continuity of $\varphi: \R\to \cL( H_m, H_m)$ with respect to the compact-open topology means the following: For sequences $(\tau_i)_{i\in\N} $ in $ \R$ and $ (x_i)_{i\in\N} $ in $ H_{m} $, if $\tau_i \rightarrow \tau$ in $\R$ and $x_i\rightarrow x$ in $H_m$ as $i\rightarrow \infty$, it follows that
\begin{align*}
\varphi(\tau_i,x_i) \longrightarrow \varphi(\tau,x) 
\end{align*}
in $H_m$ as $i\rightarrow \infty$.
We first note that it is enough to prove continuity at $\tau=0$ since
\begin{align*}
\varphi(\tau_i,x_i) \rightarrow \varphi(\tau,x) \qquad \Longleftrightarrow \qquad  \varphi(\tau_i-\tau,x_i) \rightarrow x 
\end{align*}
by the previous remark. This means that for any $\epsilon>0$ we need to show that
\begin{align*}
\Vert x-\varphi(\tau_i,x_i) \Vert_{H_m} \leq \epsilon
\end{align*}
for $i$ sufficiently large. As a first step, we show the claim in the case of a constant sequence $x_i \equiv x \in H_m$. For $m=0$, this is
Lemma~2.1 from \cite{FrauenfelderWeber18}, and we extend their proof to the case $m\neq 0$.

The map $x$ may not be smooth, but it can be approximated in $H_m$ by smooth elements. Fix $\epsilon>0$ and choose $\bar{x}\in \mathcal{C}^{\infty}(S^1,\mathbb{R}^n)$ with
\begin{align*}
\Vert \bar{x} - x \Vert_{H_m} \leq \frac{\epsilon}{6}.
\end{align*}
Now $\bar{x}$ and its derivatives $ \partial_t^k \bar{x}, k =0,\dots, m$, are uniformly continuous, thus
\begin{align*}
\Vert \partial_t^k\bar{x}(t) - \partial_t^k\bar{x}(t\shortminus\tau_i) \Vert_{\mathbb{R}^n} \leq \frac{\epsilon}{6(m+1)} \quad\text{ for all } t\in S^1
\end{align*}
for all $k=0,\dots,m$ and $i$ sufficiently large.
In particular, the $H_0$-distance satisfies
\begin{align*}
\Vert \partial_t^k\bar{x} - \varphi(\tau_i, \partial_t^k\bar{x}) \Vert_{H_0} \leq \frac{\epsilon}{6(m+1)}
\end{align*}
and we can estimate
\begin{align*}
\Vert \bar{x} - \varphi(\tau_i, \bar{x}) \Vert_{H_m}
&\leq \sum_{k=0}^m \Vert \partial_t^k\bar{x} - \partial_t^k\varphi(\tau_i, \bar{x})  \Vert_{H_0} \\
&=    \sum_{k=0}^m \Vert \partial_t^k\bar{x} - \varphi(\tau_i, \partial_t^k\bar{x})  \Vert_{H_0} \\
&\leq (m+1) \cdot \frac{\epsilon}{6(m+1)} = \frac{\epsilon}{6}.
\end{align*}
Hence,
\begin{align*}
\Vert x- \varphi(\tau_i,x) \Vert_{H_m}
&\leq \Vert x -\bar{x} \Vert_{H_m} + \Vert \bar{x} - \varphi(\tau_i, \bar{x}) \Vert_{H_m} + \Vert \varphi(\tau_i, \bar{x}) - \varphi(\tau_i, x) \Vert_{H_m} \\
&= \Vert x -\bar{x} \Vert_{H_m} + \Vert \bar{x} - \varphi(\tau_i, \bar{x}) \Vert_{H_m} + \Vert \bar{x} -  x \Vert_{H_m} \\
&\leq \frac{\epsilon}{6} + \frac{\epsilon}{6} +\frac{\epsilon}{6} = \frac{\epsilon}{2}.
\end{align*}

In particular, we have proved the statement for any constant sequence $x_i \equiv x \in H_m$. Now for the general case, let $ (x_i)_i \subseteq H_{m} $ be a sequence converging to $x$. For $\epsilon>0$ and $i$ sufficiently large we established 
\begin{align*}
\Vert x- \varphi(\tau_i,x) \Vert_{H_m} \leq \frac{\epsilon}{2}.
\end{align*}
After increasing $i$ even further, we may assume that $\Vert x-x_i \Vert_{H_m} \leq \frac{\epsilon}{2}$ since $x_i \rightarrow x $ converges in $H_m$. All in all we get
\begin{align*}
\Vert x-\varphi(\tau_i,x_i) \Vert_{H_m}
&\leq \Vert x - \varphi(\tau_i,x) \Vert_{H_m} + \Vert \varphi(\tau_i,x) -\varphi(\tau_i,x_i) \Vert_{H_m} \\
&= \Vert x - \varphi(\tau_i,x) \Vert_{H_m} + \Vert x -x_i \Vert_{H_m} \\
&\leq \frac{\epsilon}{2} + \frac{\epsilon}{2} = \epsilon
\end{align*}
as desired.
\end{proof}

To prove Lemma \ref{lem:derivative-of-shift-map}, we need the following elementary lemma about difference quotients of $H_1$-functions.

\begin{lemma}
\label{lem:difference-quotients}
Let $x\in H_1$. Then the following holds:
\begin{enumerate}[(i)]
\item $\left\Vert \frac{\varphi(T,x)-x}{T} \right\Vert_{H_0} \leq \Vert \partial_t x \Vert_{H_0}$ for $T\in\R\setminus\{0\}$.
\item $\displaystyle\lim_{T\rightarrow 0} \left\Vert \frac{\varphi(T,x)-x}{T} -\partial_t x \right\Vert_{H_0} =0$.
\end{enumerate}
\end{lemma}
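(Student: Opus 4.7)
The plan is to establish (i) by combining the fundamental theorem of calculus for Sobolev functions with a Cauchy--Schwarz estimate and Fubini's theorem, and then to deduce (ii) by a density argument using (i) to control the tail.

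For (i), I would first use that any $x\in H_1 = W^{1,2}(S^1,\R^n)$ has an absolutely continuous representative, so that for almost every $t\in S^1$ and every $T\in\R\setminus\{0\}$ one has the integral representation
\begin{align*}
\varphi(T,x)(t) - x(t) \;=\; x(t-T) - x(t) \;=\; -\int_0^T \partial_t x(t-s)\,\rd s.
\end{align*}
Applying Cauchy--Schwarz to this integral gives pointwise
\begin{align*}
|\varphi(T,x)(t) - x(t)|^2 \;\leq\; |T|\cdot \int_0^{|T|} |\partial_t x(t\shortminus\sigma)|^2\,\rd\sigma
\end{align*}
(with the obvious adjustment of the range when $T<0$). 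Integrating over $t\in S^1$ and swapping the order of integration by Fubini, then using that $\varphi(\sigma,\cdot)$ is an $H_0$-isometry, yields
\begin{align*}
\|\varphi(T,x)-x\|_{H_0}^2 \;\leq\; |T|\int_0^{|T|}\|\varphi(\sigma,\partial_t x)\|_{H_0}^2\,\rd\sigma \;=\; T^2\,\|\partial_t x\|_{H_0}^2,
\end{align*}
and dividing by $T^2$ gives (i).

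For (ii), I would proceed by density of $\cC^\infty(S^1,\R^n)$ in $H_1$. Given $\epsilon>0$, choose a smooth $\bar x$ with $\|\partial_t(x-\bar x)\|_{H_0}<\epsilon$, and split
\begin{align*}
\frac{\varphi(T,x)-x}{T}-\partial_t x \;=\; \left(\frac{\varphi(T,x-\bar x)-(x-\bar x)}{T}-\partial_t(x-\bar x)\right) + \left(\frac{\varphi(T,\bar x)-\bar x}{T}-\partial_t \bar x\right).
\end{align*}
The first bracket has $H_0$-norm at most $2\epsilon$ uniformly in $T$: the difference quotient piece is controlled by (i) applied to $x-\bar x$, and the $\partial_t(x-\bar x)$-piece by the choice of $\bar x$. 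For the second bracket, since $\bar x\in\cC^\infty(S^1,\R^n)$ and $S^1$ is compact, $\partial_t \bar x$ is uniformly continuous, so Taylor's formula gives
\begin{align*}
\left|\frac{\bar x(t-T)-\bar x(t)}{T}-\partial_t\bar x(t)\right|\longrightarrow 0\qquad\text{uniformly in }t,
\end{align*}
which implies $H_0$-convergence to zero as $T\to 0$. Letting $T\to 0$ and then $\epsilon\to 0$ proves (ii).

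The proof is essentially routine; the only slightly technical point is the rigorous justification of the integral representation for a general Sobolev function and the interchange of integrals via Fubini, both of which are standard. The sign conventions (noting that $\varphi(T,x)(t)=x(t\shortminus T)$) must be tracked carefully so that the limiting derivative comes out with the correct sign.
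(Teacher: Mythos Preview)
Your proof is correct and follows essentially the same route as the paper: part (i) via the fundamental theorem of calculus for $W^{1,2}$-functions, Cauchy--Schwarz, and Fubini together with the isometry property of the shift; part (ii) via density of $\cC^\infty$ in $H_1$, using (i) to bound the non-smooth remainder uniformly in $T$ and the classical difference-quotient limit for the smooth approximant. The only cosmetic difference is that the paper writes the triangle-inequality splitting in (ii) as three separate terms rather than grouping them into your two brackets.
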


\begin{proof}
Since the map $x\in H_1$ is, in particular, weakly differentiable, we get
\begin{align*}
\Vert x(t+T) - x(t) \Vert
\leq \int_0^1 \Vert \partial_tx(t + sT) \Vert \vert T \vert \;\rd s  
\end{align*}
for every $t\in S^1$, $T\in\R$. Squaring this and using the Cauchy-Schwarz inequality leads to
\begin{align*}
\Vert x(t+T) - x(t) \Vert^2
\leq \vert T \vert^2\int_0^1 \Vert \partial_tx(t + sT) \Vert^2  \;\rd s 
\end{align*}
which is of course equivalent to
\begin{align*}
\frac{\Vert x(t+T) - x(t) \Vert^2}{T^2}
\leq \int_0^1 \Vert \partial_tx(t + sT) \Vert^2  \;\rd s .
\end{align*}
Now we integrate over $t\in S^1$ and get
\begin{align*}
\left\Vert \frac{\varphi(-T,x)-x}{T} \right\Vert_{H_0}^2
&= \int_0^1 \frac{\Vert x(t+T) - x(t) \Vert^2}{T^2} \;\rd t \\
&\leq \int_0^1 \int_0^1 \Vert \partial_tx(t + sT) \Vert^2  \;\rd s \;\rd t \\[1ex]
&= \Vert \partial_tx \Vert_{H_0}^2.
\end{align*}
Using $\Vert\varphi(T,x)-x\Vert_{H_0}=\Vert\varphi(-T,x)-x\Vert_{H_0}$ the first assertion follows.

To show (ii), we approximate $x$ in $H_1$ by smooth functions $x_k\in \cC^{\infty}(S^1,\R^n)$. Using the triangle inequality we compute
\begin{align*}
\left\Vert \frac{\varphi(T,x)-x}{T} -\partial_t x \right\Vert_{H_0}
&\leq \left\Vert \frac{\varphi(T,x)-x}{T} -\frac{\varphi(T,x_k)-x_k}{T} \right\Vert_{H_0} \\
&\phantom{=}
	+ \left\Vert \frac{\varphi(T,x_k)-x_k}{T} -\partial_t x_k \right\Vert_{H_0}
	+ \left\Vert \partial_t x_k - \partial_tx \right\Vert_{H_0}
\end{align*}
for every $k$. The second term on the right-hand side goes to $0$ for $T \rightarrow 0$ because $x_k$ is smooth. The first term  is bounded by $\Vert \partial_tx - \partial_t x_k \Vert_{H_0}$ according to (i) using linearity of $\varphi$ in its second argument. Therefore, the second assertion follows.
\end{proof}

\begin{proof}[Proof of Lemma \ref{lem:derivative-of-shift-map}]
We have to show that
\begin{align*}
\lim_{\Vert (T,\hat{x})\Vert\rightarrow 0} \;\tfrac{1}{\Vert (T,\hat{x})\Vert } 
  \Vert \varphi(\tau + T, x+\hat{x}) - \varphi(\tau,x) - \varphi(\tau,\hat{x}) + T\cdot\varphi(\tau,\partial_tx) \Vert_{H_0}
  = 0,
\end{align*}
where it is convenient to define the norm of the pair $(T,\hat{x})$ by $\Vert (T,\hat{x})\Vert^2 = \vert T\vert^2+\Vert \hat{x} \Vert_{H_1}^2$. Using $\Vert \varphi(\tau,x)\Vert_{H_0} = \Vert x \Vert_{H_0}$, we compute the following:
\begin{align}
\tfrac{1}{\Vert (T,\hat{x})\Vert^2 } 
  &\Vert \varphi(\tau +T,x+\hat{x})-\varphi(\tau,x) -\varphi(\tau,\hat{x}) +T\cdot\varphi(\tau,\partial_tx)\Vert^2_{H_0} \nonumber\\[1ex]
&=\tfrac{1}{\Vert (T,\hat{x})\Vert^2 } 
  \Vert \varphi(T, x+\hat{x}) - \varphi(0,x)-\varphi(0,\hat{x}) +T\cdot\varphi(0,\partial_tx) \Vert^2_{H_0} \nonumber\\[1ex]
&=\tfrac{1}{\Vert (T,\hat{x})\Vert^2 } 
  \Vert \varphi(T, x) + \varphi(T,\hat{x}) - x-\hat{x} + T\cdot\partial_tx \Vert^2_{H_0} \nonumber\\[1ex]
&\leq \tfrac{1}{\Vert (T,\hat{x})\Vert^2 } 
	\Big( \Vert \varphi(T, x) -x + T\cdot\partial_tx \Vert_{H_0}
		+ \Vert \varphi(T,\hat{x}) -\hat{x}  \Vert_{H_0} \Big)^2 \nonumber\\[1ex]
\begin{split}
&= \tfrac{1}{\Vert (T,\hat{x})\Vert^2 } 
			\Vert \varphi(T, x) -x + T\cdot\partial_tx \Vert_{H_0}^2 \\
	&\hspace{1cm} + \tfrac{1}{\Vert (T,\hat{x})\Vert^2 } 
			\Vert \varphi(T,\hat{x}) -\hat{x}  \Vert_{H_0}^2 \\
	&\hspace{1cm} + \tfrac{2}{\Vert (T,\hat{x})\Vert^2 } 
			\Vert \varphi(T, x) -x + T\cdot\partial_tx \Vert_{H_0}
			\cdot \Vert \varphi(T,\hat{x}) -\hat{x}  \Vert_{H_0}
\label{eq:three-integrals}
\end{split}
\end{align}

For the first term in \eqref{eq:three-integrals} we use $\tfrac{1}{\Vert (T,\hat{x})\Vert } \leq \frac{1}{\vert T\vert}$ and obtain
\begin{align*}
\tfrac{1}{\Vert (T,\hat{x})\Vert^2 } 
			\Vert \varphi(T, x) -x + T\cdot\partial_tx \Vert_{H_0}^2
&\leq
			\left\Vert \frac{\varphi(T, x) -x}{T} + \partial_tx \right\Vert_{H_0}^2 \longrightarrow 0 
\end{align*}
as $T\rightarrow 0$, where we used Lemma~\ref{lem:difference-quotients} (ii). For the second term in \eqref{eq:three-integrals} we similarly use $\tfrac{1}{\Vert (T,\hat{x})\Vert } \leq \frac{1}{\Vert \hat{x}\Vert_{H_1}}$ and see
\begin{align*}
\tfrac{1}{\Vert (T,\hat{x})\Vert^2 } 
			\Vert \varphi(T,\hat{x}) -\hat{x}  \Vert_{H_0}^2
&\leq \tfrac{1}{\Vert \hat{x}\Vert_{H_1}^2} \Vert \partial_t\hat{x} \Vert_{H_0}^2 \cdot T ^2 \longrightarrow 0
\end{align*}
as $T\rightarrow 0$, where we used Lemma~\ref{lem:difference-quotients} (i). For the product in the third term in \eqref{eq:three-integrals} we use the same arguments to treat both factors separately and see that both tend to $0$.
\end{proof}

\begin{proof}[Proof of Lemma \ref{lem:k-times-differentiable} (sketch).]
Using the formula \eqref{eq:formula-derivative-of-phi} for the first derivative of $\varphi$, we get that if a second derivative of $\varphi$ exists at the point $(\tau,x)\in\R\times H_m$, then it has to be
\begin{align*}
\rd^2\varphi (\tau,x) \Big( (T_1,\hat{x}_1), (T_2, \hat{x}_2) \Big) = -\,T_2\cdot\varphi(\tau,\partial_t\hat{x}_1) -T_1\cdot\varphi(\tau,\partial_t \hat{x}_2) +  T_1\cdot T_2 \cdot \varphi(\tau,\partial_t^2x)
\end{align*}
which is well-defined if $x\in H_2$. Iteratively computing what an $m$-th derivative of $\varphi$ at $(\tau,x)$ should look like, we see that as a multilinear map
\begin{align}
\begin{split}
\rd^m\varphi (\tau,x) : \underbrace{(\mathbb{R} \times H_m) \times \dots \times (\mathbb{R} \times H_m)}_{m \text{ times }} &\longrightarrow H_0 \\
\Big( (T_1,\hat{x}_1), \dots, (T_m, \hat{x}_m) \Big) &\longmapsto \dots + (-1)^m \prod_{i=1}^m T_i \cdot \varphi(\tau,\partial_t^mx)
\label{eq:mth-derivative}
\end{split}
\end{align}
it has a lot of summands that involve shifts by $\tau$ of the maps $\hat{x}_m$, $\partial_t\hat{x}_{m-1}$, $\partial_t^2\hat{x}_{m-2}, \dots$, $\partial_t^{m-1}\hat{x}_1$ and $\partial_t^mx$. Thus it needs an $m$-th derivative of $x$. To show that \eqref{eq:mth-derivative} really meets the definition of a derivative, one can estimate each summand exactly as in the proof of Lemma~\ref{lem:derivative-of-shift-map}. Again, one can show that all these derivatives are continuous, so the map $\varphi: \mathbb{R} \times H_m \rightarrow H_0$ is $\mathcal{C}^m$.
\end{proof}



   \bibliographystyle{amsalpha}   
   \bibliography{bibliography}

\end{document}